\documentclass[12pt,reqno,openbib,runningheads,a4paper]{amsart}%
\usepackage{eurosym}
\usepackage{amssymb}
\usepackage{amsfonts}
\usepackage{amsmath}
\usepackage{graphicx}
\usepackage[authoryear]{natbib}
\usepackage[dvipsnames]{xcolor}
\usepackage[legalpaper,bookmarks=true,colorlinks=true,linkcolor=blue,citecolor=blue]%
{hyperref}%
\setcounter{MaxMatrixCols}{30}
\providecommand{\U}[1]{\protect\rule{.1in}{.1in}}
\providecommand{\U}[1]{\protect\rule{.1in}{.1in}}
\textheight 230mm
\topmargin  0mm
\textwidth 165mm
\oddsidemargin 0mm
\evensidemargin 0mm

\newtheorem{theorem}{Theorem}[section]

\newtheorem{lemma}{Lemma}[section]

\makeatletter
\renewcommand{\@biblabel}[1]{}
\makeatother
\begin{document}

\begin{center}
{\Large \textbf{Estimating the second-order parameter of regular variation and
bias reduction in tail index estimation under random truncation}}

\medskip\medskip

{\large Nawel Haouas, Abdelhakim Necir}$^{\ast},$ {\large Brahim
Brahimi}\medskip

{\small \textit{Laboratory of Applied Mathematics, Mohamed Khider University,
Biskra, Algeria}}\medskip\medskip%
\[
\]

\end{center}

\noindent\textbf{Abstract}\medskip

\noindent In this paper, we propose an estimator of the second-order parameter
of randomly right-truncated Pareto-type distributions data and establish its
consistency and asymptotic normality. Moreover, we derive an asymptotically
unbiased estimator of the tail index and study its asymptotic behaviour. Our
considerations are based on a useful Gaussian approximation of the tail
product-limit process recently given by Benchaira et al. [Tail product-limit
process for truncated data with application to extreme value index estimation.
\textit{Extremes, }2016; 19: 219-251] and the results of Gomes et al.
[Semi-parametric estimation of the second order parameter in statistics of
extremes. \textit{Extremes, }2002\textit{;} 5: 387-414].\ We show, by
simulation, that the proposed estimators behave well, in terms of bias and
mean square error.\medskip

\noindent\textbf{Keywords:} Bias-reduction; Extreme value index; Product-limit
estimator; Random truncation; Second-order parameter.\medskip

\noindent\textbf{AMS 2010 Subject Classification:} 60F17, 62G30, 62G32, 62P05.

\vfill

\vfill

\noindent{\small $^{\text{*}}$Corresponding author:
\texttt{necirabdelhakim@yahoo.fr} \newline}

\noindent{\small \textit{E-mail addresses:}\newline%
\texttt{nawel.haouas@yahoo.com} (N.~Haoues)\newline%
\texttt{brah.brahim@gmail.com} (B.~Brahimi)}

\section{\textbf{Introduction\label{sec1}}}

\noindent Let $\left(  \mathbf{X}_{i},\mathbf{Y}_{i}\right)  ,$ $1\leq i\leq
N$ be a sample of size $N\geq1$ from a couple $\left(  \mathbf{X}%
,\mathbf{Y}\right)  $ of independent random variables (rv's) defined over some
probability space $\left(  \Omega,\mathcal{A},\mathbf{P}\right)  ,$ with
continuous marginal distribution functions (df's) $\mathbf{F}$ and
$\mathbf{G}$ respectively.$\ $Suppose that $\mathbf{X}$ is truncated to the
right by $\mathbf{Y},$ in the sense that $\mathbf{X}_{i}$ is only observed
when $\mathbf{X}_{i}\leq\mathbf{Y}_{i}.$ We assume that both survival
functions $\overline{\mathbf{F}}:=1-\mathbf{F}$\textbf{\ }and $\overline
{\mathbf{G}}:=1-\mathbf{G}$ are regularly varying at infinity with negative
tail indices $-1/\gamma_{1}$ and $-1/\gamma_{2}$ respectively, that is, for
any $x>0$%
\begin{equation}
\lim_{z\rightarrow\infty}\frac{\overline{\mathbf{F}}\left(  xz\right)
}{\overline{\mathbf{F}}\left(  z\right)  }=x^{-1/\gamma_{1}}\text{ and }%
\lim_{z\rightarrow\infty}\frac{\overline{\mathbf{G}}\left(  xz\right)
}{\overline{\mathbf{G}}\left(  z\right)  }=x^{-1/\gamma_{2}}. \label{RV-1}%
\end{equation}
Since the weak approximations of extreme value theory based statistics are
achieved in the second-order framework \citep[see][]{deHS96}, then it seems
quite natural to suppose that both df's $\mathbf{F}$ and $\mathbf{G}$ satisfy
the well-known second-order condition of regular variation specifying the
rates of convergence in $(\ref{RV-1}).$ That is, we assume that for any $x>0$%
\begin{equation}
\underset{t\rightarrow\infty}{\lim}\dfrac{\mathbb{U}_{\mathbf{F}}\left(
tx\right)  /\mathbb{U}_{\mathbf{F}}\left(  t\right)  -x^{\gamma_{1}}%
}{\mathbf{A}_{\mathbf{F}}\left(  t\right)  }=x^{\gamma_{1}}\dfrac{x^{\rho_{1}%
}-1}{\rho_{1}}, \label{second-order}%
\end{equation}
and%
\begin{equation}
\underset{t\rightarrow\infty}{\lim}\dfrac{\mathbb{U}_{\mathbf{G}}\left(
tx\right)  /\mathbb{U}_{\mathbf{G}}\left(  t\right)  -x^{\gamma_{2}}%
}{\mathbf{A}_{\mathbf{G}}\left(  t\right)  }=x^{\gamma_{2}}\dfrac{x^{\rho_{2}%
}-1}{\rho_{2}}, \label{second-orderG}%
\end{equation}
where $\rho_{1},\rho_{2}<0$\ are the second-order parameters and
$\mathbf{A}_{\mathbf{F}},$ $\mathbf{A}_{\mathbf{G}}$\ are functions tending to
zero and not changing signs near infinity with regularly varying absolute
values at infinity with indices $\rho_{1},$ $\rho_{2}$ respectively. For any
df $H,$ $\mathbb{U}_{H}\left(  t\right)  :=H^{\leftarrow}\left(  1-1/t\right)
,$ $t>1,$ stands for the quantile function. This class of distributions, which
includes models such as Burr, Fr\'{e}chet, Generalized Pareto,
Student,log-gamma, stable,... takes a prominent role in extreme value theory.
Also known as heavy-tailed, Pareto-type or Pareto-like distributions, these
models have important practical applications and are used rather
systematically in certain branches of non-life insurance, as well as in
finance, telecommunications, hydrology, etc... \citep[see, e.g.,][]{Res06}. We
denote the observed observations of the truncated sample $\left(
\mathbf{X}_{i},\mathbf{Y}_{i}\right)  ,$ $i=1,...,N,$ by $\left(  X_{i}%
,Y_{i}\right)  ,$ $i=1,...,n,$ as copies of a couple of rv's $\left(
X,Y\right)  ,$ where $n=n_{N}$ is a sequence of discrete rv's for which,\ by
of the weak law of large numbers satisfies $n_{N}/N\overset{\mathbf{P}%
}{\rightarrow}p:=\mathbf{P}\left(  \mathbf{X}\leq\mathbf{Y}\right)  ,$ as
$N\rightarrow\infty.$ The usefulness of the statistical analysis under random
truncation is shown in \cite{Hrbst-99} where the authors applies truncated
model techniques to estimate loss reserves for incurred but not reported
(IBNR) claim amounts.\textbf{\ }For a recent discussion on randomly
right-truncated insurance claims, one refers to \cite{EO-2008}. In
reliability, a real dataset, consisting in lifetimes of automobile brake pads
and already considered by \cite{Lawless} in page 69, was recently analyzed in
\cite{GS2015} and \cite{BchMN-16a} as an application of randomly truncated
heavy-tailed models. The joint distribution of $X_{i}$ and $Y_{i}$ is
$H\left(  x,y\right)  :=\mathbf{P}\left(  X\leq x,Y\leq y\right)
=\mathbf{P}\left(  \mathbf{X}\leq\min\left(  x,\mathbf{Y}\right)
,\mathbf{Y}\leq y\mid\mathbf{X}\leq\mathbf{Y}\right)  ,$ which equals $p^{-1}%
{\displaystyle\int_{0}^{y}}
\mathbf{F}\left(  \min\left(  x,z\right)  \right)  d\mathbf{G}\left(
z\right)  .$ The marginal distributions of the rv's $X$ and $Y,$ respectively
denoted by $F^{\ast}$ and $G^{\ast},$ are equal to $p^{-1}%
{\displaystyle\int_{0}^{x}}
\overline{\mathbf{G}}\left(  z\right)  d\mathbf{F}\left(  z\right)  $ and
$p^{-1}\int_{0}^{y}\mathbf{F}\left(  z\right)  d\mathbf{G}\left(  z\right)  ,$
respectively. The tail of df $F^{\ast}$ simultaneously depends on
$\overline{\mathbf{G}}$ and $\overline{\mathbf{F}}$ while that of $G^{\ast}$
only relies on $\overline{\mathbf{G}}\mathbf{.}$ By using Proposition B.1.10
in \cite{deHF06}, to the regularly varying functions $\overline{\mathbf{F}}$
and $\overline{\mathbf{G}},$ we also show that both $\overline{G}^{\ast}$ and
$\overline{F}^{\ast}$ are regularly varying at infinity, with respective
indices $\gamma_{2}$ and $\gamma:=\gamma_{1}\gamma_{2}/\left(  \gamma
_{1}+\gamma_{2}\right)  .$ Recently \cite{GS2015} addressed the estimation of
the extreme value index $\gamma_{1}$ under random right-truncation and used
the definition of $\gamma$ to derive a consistent estimator as a quotient of
two Hill estimators \citep{Hill75} of tail indices $\gamma$ and $\gamma_{2}$
which are based on the upper order statistics $X_{n-k:n}\leq...\leq X_{n:n}$
and $Y_{n-k:n}\leq...\leq Y_{n:n}$ pertaining to the samples $\left(
X_{1},...,X_{n}\right)  $ and $\left(  Y_{1},...,Y_{n}\right)  $ respectively.
The sample fraction $k=k_{n}$ being a (random) sequence of integers such that,
given $n=m=m_{N},$ $k_{m}\rightarrow\infty$ and $k_{m}/m\rightarrow0$ as
$N\rightarrow\infty.$ Under the tail dependence and the second-order regular
variation conditions, \cite{BchMN-15} established the asymptotic normality of
this estimator. Recently, \cite{WW2016} proposed an asymptotically normal
estimator for $\gamma_{1}$ by considering a Lynden-Bell integration with a
deterministic threshold. The case of a random threshold, is addressed by
\cite{BchMN-16a} who propose a Hill-type estimator for randomly
right-truncated data, defined by%
\begin{equation}
\widehat{\gamma}_{1}^{\left(  BMN\right)  }=\sum_{i=1}^{k}a_{n}^{\left(
i\right)  }\log\frac{X_{n-i+1:n}}{X_{n-k:n}}, \label{BMN}%
\end{equation}
where%
\[
a_{n}^{\left(  i\right)  }:=\dfrac{\mathbf{F}_{n}\left(  X_{n-i+1:n}\right)
/C_{n}\left(  X_{n-i+1:n}\right)  }{%
{\displaystyle\sum\limits_{i=1}^{k}}
\mathbf{F}_{n}\left(  X_{n-i+1:n}\right)  /C_{n}\left(  X_{n-i+1:n}\right)
},
\]
with $\mathbf{F}_{n}\left(  x\right)  :=\prod_{i:X_{i}>x}\exp\left\{
-\dfrac{1}{nC_{n}\left(  X_{i}\right)  }\right\}  $ is the well-known
product-limit Woodroofe's estimator \citep{W-85} of the underlying df
$\mathbf{F}$ and $C_{n}\left(  x\right)  :=n^{-1}\sum\nolimits_{i=1}%
^{n}\mathbf{1}\left(  X_{i}\leq x\leq Y_{i}\right)  .$ The authors show by
simulation that, for small datasets, their estimator behaves better in terms
of bias and root of the mean squared error (rmse), than Gardes-Supfler's
estimator. Moreover, they establish the asymptotic normality by considering
the second-order regular variation conditions $(\ref{second-order})$\ and
$(\ref{second-orderG})$ and the assumption $\gamma_{1}<\gamma_{2}.$ More
precisely, they show that, for a sufficiently large $N,$%
\begin{equation}
\widehat{\gamma}_{1}^{\left(  BMN\right)  }=\gamma_{1}+k^{-1/2}\Lambda\left(
\mathbf{W}\right)  +\frac{\mathbf{A}_{0}\left(  n/k\right)  }{1-\rho_{1}%
}\left(  1+o_{\mathbf{P}}\left(  1\right)  \right)  , \label{approxi}%
\end{equation}
where $\mathbf{A}_{0}\left(  t\right)  :=\mathbf{A}_{\mathbf{F}}\left(
1/\overline{\mathbf{F}}\left(  \mathbb{U}_{F^{\ast}}\left(  t\right)  \right)
\right)  ,$ $t>1,$ and $\Lambda\left(  \mathbf{W}\right)  $ is a centred
Gaussian rv defined by%
\[
\Lambda\left(  \mathbf{W}\right)  :=\frac{\gamma}{\gamma_{1}+\gamma_{2}}%
\int_{0}^{1}\left(  \gamma_{2}-\gamma_{1}-\gamma\log s\right)  s^{-\gamma
/\gamma_{2}-1}\mathbf{W}\left(  s\right)  ds-\gamma\mathbf{W}\left(  1\right)
,
\]
with $\left\{  \mathbf{W}\left(  s\right)  ;\text{ }s\geq0\right\}  $ being a
standard Wiener process\ defined on the probability space $\left(
\Omega,\mathcal{A},\mathbf{P}\right)  .$ Thereby, they conclude that $\sqrt
{k}\left(  \widehat{\gamma}_{1}^{\left(  BMN\right)  }-\gamma_{1}\right)
\overset{\mathcal{D}}{\rightarrow}\mathcal{N}\left(  \lambda/\left(
1-\rho_{1}\right)  ,\sigma^{2}\right)  ,$ as $N\rightarrow\infty,$ where
$\sigma^{2}:=\gamma^{2}\left(  1+\gamma_{1}/\gamma_{2}\right)  \left(
1+\left(  \gamma_{1}/\gamma_{2}\right)  ^{2}\right)  /\left(  1-\gamma
_{1}/\gamma_{2}\right)  ,$ provided that, given $n=m,$ $\sqrt{k_{m}}%
\mathbf{A}_{0}\left(  m/k_{m}\right)  \rightarrow\lambda<\infty.$ Recently,
\cite{BchMN-16b} adopted the same approach to introduce a kernel estimator to
the tail index $\gamma_{1}$ which improves the bias of $\widehat{\gamma}%
_{1}^{\left(  BMN\right)  }.$ It is worth mentioning that the assumption
$\gamma_{1}<\gamma_{2}$ is required in order to ensure that it remains enough
extreme data for the inference to be accurate. In other words, they consider
the situation where the tail of the rv of interest is not too contaminated by
the truncation rv.$\mathbf{\medskip}$

\noindent The aim of this paper is the estimation of the second
order-parameter $\rho_{1}$ given in condition $(\ref{second-order})$ which, to
our knowledge, is not addressed yet in the extreme value literature. This
parameter is of practical relevance in extreme value analysis due its crucial
importance in selecting the optimal number of upper order statistics $k$ in
tail index estimation (see, e.g., \citeauthor{deHF06}, \citeyear[page
77]{deHF06}) and in reducing the bias of such estimation. In the case of
complete data, this problem has received a lot of attention from many authors
like, for instance, \cite{peng98}, \cite{FAHL03}, \cite{GHP2003}, \cite{PQ-4},
\cite{GBW2010}, \cite{WGG2012}, \cite{WW012}, \cite{DGG013}. Inspired by the
paper of \cite{GHP2003}, we propose an estimator for $\rho_{1}$ adapted to the
random right-truncation case. To this end, for $\alpha>0$ and $t>0,$ we
introduce the following tail functionals%
\begin{equation}
M^{\left(  \alpha\right)  }\left(  t;\mathbf{F}\right)  :=\frac{1}%
{\overline{\mathbf{F}}\left(  \mathbb{U}_{F^{\ast}}\left(  t\right)  \right)
}\int_{\mathbb{U}_{F^{\ast}}\left(  t\right)  }^{\infty}\log^{\alpha}\left(
x/\mathbb{U}_{F^{\ast}}\left(  t\right)  \right)  d\mathbf{F}\left(  x\right)
, \label{Mt}%
\end{equation}%
\begin{equation}
Q^{\left(  \alpha\right)  }\left(  t;\mathbf{F}\right)  :=\frac{M^{\left(
\alpha\right)  }\left(  t;\mathbf{F}\right)  -\Gamma\left(  \alpha+1\right)
\left(  M^{\left(  1\right)  }\left(  t;\mathbf{F}\right)  \right)  ^{\alpha}%
}{M^{\left(  2\right)  }\left(  t;\mathbf{F}\right)  -2\left(  M^{\left(
1\right)  }\left(  t;\mathbf{F}\right)  \right)  ^{2}}, \label{Q}%
\end{equation}
and%
\begin{equation}
S^{\left(  \alpha\right)  }\left(  t;\mathbf{F}\right)  :=\delta\left(
\alpha\right)  \frac{Q^{\left(  2\alpha\right)  }\left(  t;\mathbf{F}\right)
}{\left(  Q^{\left(  \alpha+1\right)  }\left(  t;\mathbf{F}\right)  \right)
^{2}}, \label{S}%
\end{equation}
where $\log^{\alpha}x:=\left(  \log x\right)  ^{\alpha}$ and $\delta\left(
\alpha\right)  :=\alpha\left(  \alpha+1\right)  ^{2}\Gamma^{2}\left(
\alpha\right)  /\left(  4\Gamma\left(  2\alpha\right)  \right)  ,$ with
$\Gamma\left(  \cdot\right)  $ standing for the usual Gamma function. From
assertion $\left(  ii\right)  $ of Lemma $\ref{lemma1},$ we have, for any
$\alpha>0,$%
\begin{equation}
M^{\left(  \alpha\right)  }\left(  t;\mathbf{F}\right)  \rightarrow\gamma
_{1}^{\alpha}\Gamma\left(  \alpha+1\right)  ,\text{ }Q^{\left(  \alpha\right)
}\left(  t;\mathbf{F}\right)  \rightarrow q_{\alpha}\left(  \rho_{1}\right)
\text{ and }S^{\left(  \alpha\right)  }\left(  t;\mathbf{F}\right)
\rightarrow s_{\alpha}\left(  \rho_{1}\right)  , \label{Slim}%
\end{equation}
as $t\rightarrow\infty,$ where%
\begin{equation}
q_{\alpha}\left(  \rho_{1}\right)  :=\frac{\gamma_{1}^{\alpha-2}\Gamma\left(
\alpha+1\right)  \left(  1-\left(  1-\rho_{1}\right)  ^{\alpha}-\alpha\rho
_{1}\left(  1-\rho_{1}\right)  ^{\alpha-1}\right)  }{\rho_{1}^{2}\left(
1-\rho_{1}\right)  ^{\alpha-2}}, \label{q}%
\end{equation}
and%
\begin{equation}
s_{\alpha}\left(  \rho_{1}\right)  :=\frac{\rho_{1}^{2}\left(  1-\left(
1-\rho_{1}\right)  ^{2\alpha}-2\alpha\rho_{1}\left(  1-\rho_{1}\right)
^{2\alpha-1}\right)  }{\left(  1-\left(  1-\rho_{1}\right)  ^{\alpha
+1}-\left(  \alpha+1\right)  \rho_{1}\left(  1-\rho_{1}\right)  ^{\alpha
}\right)  ^{2}}. \label{s}%
\end{equation}
The three results $\left(  \ref{Slim}\right)  $ allow us to construct an
estimator for the second-order parameter $\rho_{1}.$ Indeed, by recalling that
$n=n_{N}$ is a random sequence of integers, let $\upsilon=\upsilon_{n}$ be a
subsequence of $n,$ different than $k,$ such that given $n=m,$ $\upsilon
_{m}\rightarrow\infty,$ $\upsilon_{m}/m\rightarrow0$ as $N\rightarrow\infty.$
The sequence $\upsilon$ has to be chosen so that $\sqrt{\upsilon_{m}%
}\left\vert \mathbf{A}_{0}\left(  m/\upsilon_{m}\right)  \right\vert
\rightarrow\infty,$ which is a necessary condition to ensure the consistency
of $\rho_{1}$ estimator. On the other hand, as already pointed out, the
asymptotic normality of$\ \widehat{\gamma}_{1}^{\left(  BMN\right)  }$
requires that, for a given $n=m,$ $\sqrt{k_{m}}\mathbf{A}_{0}\left(
m/k_{m}\right)  \rightarrow\lambda<\infty.$ This means that both sample
fractions $k$ and $\upsilon$ have to be distinctly chosen. Since $\overline
{F}^{\ast}$ is regularly varying at infinity with index $-1/\gamma,$ then from
Lemma 3.2.1 in \cite{deHF06} page 69, we infer that, given $n=m,$ we have
$X_{m-\upsilon_{m}:m}\rightarrow\infty$ as $N\rightarrow\infty$ almost surely.
Then by using the total probability formula, we show that $X_{n-\upsilon
:n}\rightarrow\infty,$ almost surely too. By letting, in $\left(
\ref{Mt}\right)  ,$ $t=n/\upsilon$ then by replacing $\mathbb{U}_{F^{\ast}%
}\left(  n/\upsilon\right)  $ by $X_{n-\upsilon:n}$ and$\ \mathbf{F}$ by the
product-limit estimator $\mathbf{F}_{n},$ we get an estimator $M_{n}^{\left(
\alpha\right)  }\left(  \upsilon\right)  =M^{\left(  \alpha\right)  }\left(
t;\mathbf{F}_{n}\right)  $ for $M^{\left(  \alpha\right)  }\left(
t;\mathbf{F}\right)  $ as follows:%
\begin{equation}
M_{n}^{\left(  \alpha\right)  }\left(  \upsilon\right)  =\frac{1}%
{\overline{\mathbf{F}}_{n}\left(  X_{n-\upsilon:n}\right)  }\int
_{X_{n-\upsilon:n}}^{\infty}\log^{\alpha}\left(  x/X_{n-\upsilon:n}\right)
d\mathbf{F}_{n}\left(  x\right)  . \label{Mn}%
\end{equation}
Next, we give an explicit formula for $M_{n}^{\left(  \alpha\right)  }\left(
\upsilon\right)  $ in terms of observed sample $X_{1},...,X_{n}.$ Since
$\overline{\mathbf{F}}$ and $\overline{\mathbf{G}}$ are regularly varying with
negative indices $-1/\gamma_{1}$ and $-1/\gamma_{2}$ respectively, then their
right endpoints are infinite and thus they are equal. Hence, from \cite{W-85},
we may write $\int_{x}^{\infty}d\mathbf{F}\left(  y\right)  /\mathbf{F}\left(
y\right)  =\int_{x}^{\infty}dF^{\ast}\left(  y\right)  /C\left(  y\right)  ,$
where $C\left(  z\right)  :=\mathbf{P}\left(  X\leq z\leq Y\right)  $ is the
theoretical counterpart of $C_{n}\left(  z\right)  $ given in $(\ref{BMN}).$
Differentiating the previous two integrals leads to the crucial equation
$C\left(  x\right)  d\mathbf{F}\left(  x\right)  =\mathbf{F}\left(  x\right)
dF^{\ast}\left(  x\right)  ,$ which implies that $C_{n}\left(  x\right)
d\mathbf{F}_{n}\left(  x\right)  =\mathbf{F}_{n}\left(  x\right)  dF_{n}%
^{\ast}\left(  x\right)  ,$ where $F_{n}^{\ast}\left(  x\right)  :=n^{-1}%
\sum_{i=1}^{n}1\left(  X_{i}\leq x\right)  $ is the usual empirical df based
on the observed sample $X_{1},...,X_{n}.$ It follows that
\[
M_{n}^{\left(  \alpha\right)  }\left(  \upsilon\right)  =\frac{1}%
{\overline{\mathbf{F}}_{n}\left(  X_{n-\upsilon:n}\right)  }\int
_{X_{n-\upsilon:n}}^{\infty}\frac{\mathbf{F}_{n}\left(  x\right)  }%
{C_{n}\left(  x\right)  }\log^{\alpha}xdF_{n}^{\ast}\left(  x\right)  ,
\]
which equals%
\[
M_{n}^{\left(  \alpha\right)  }\left(  \upsilon\right)  =\frac{1}%
{n\overline{\mathbf{F}}_{n}\left(  X_{n-\upsilon:n}\right)  }\sum
_{i=1}^{\upsilon}\frac{\mathbf{F}_{n}\left(  X_{n-i+1:n}\right)  }%
{C_{n}\left(  X_{n-i+1:n}\right)  }\log^{\alpha}\frac{X_{n-i+1:n}%
}{X_{n-\upsilon:n}}.
\]
Similarly, we show that $\overline{\mathbf{F}}_{n}\left(  X_{n-\upsilon
:n}\right)  =n^{-1}\sum_{i=1}^{n}\mathbf{F}_{n}\left(  X_{n-i+1:n}\right)
/C_{n}\left(  X_{n-i+1:n}\right)  .$ This leads to following form of
$M^{\left(  \alpha\right)  }\left(  t;\mathbf{F}\right)  $ estimator:%
\[
M_{n}^{\left(  \alpha\right)  }\left(  \upsilon\right)  :=\sum_{i=1}%
^{\upsilon}a_{n}^{\left(  i\right)  }\log^{\alpha}\frac{X_{n-i+1:n}%
}{X_{n-\upsilon:n}}.
\]
It is readily observable that $M_{n}^{\left(  1\right)  }\left(  k\right)
=\widehat{\gamma}_{1}^{\left(  BMN\right)  }.$ Making use of $\left(
\ref{S}\right)  $ with the expression above, we get an estimator of
$S^{\left(  \alpha\right)  }\left(  t;\mathbf{F}\right)  ,$ that we denote
$S_{n}^{\left(  \alpha\right)  }=S_{n}^{\left(  \alpha\right)  }\left(
\upsilon\right)  .$ This, in virtue of the third limit in $\left(
\ref{Slim}\right)  ,$ leads to estimating $s_{\alpha}\left(  \rho_{1}\right)
.$ It is noteworthy that the function $\rho_{1}\rightarrow s_{\alpha}\left(
\rho_{1}\right)  ,$ defined and continuous on the set of negative real
numbers, is increasing for\textbf{\ }$0<\alpha<1/2$ and decreasing
for\textbf{\ }$\alpha>1/2,$\textbf{\ }$\alpha\neq1.$ Then, for suitable values
of $\alpha,$ we may invert $s_{\alpha}$ to get an estimator $\widehat{\rho
}_{1}^{\left(  \alpha\right)  }$ for $\rho_{1}$ as follows:%
\begin{equation}
\widehat{\rho}_{1}^{\left(  \alpha\right)  }:=s_{\alpha}^{\leftarrow}\left(
S_{n}^{\left(  \alpha\right)  }\right)  ,\text{ provided that }S_{n}^{\left(
\alpha\right)  }\in\mathcal{A}_{\alpha}, \label{rho1}%
\end{equation}
where $\mathcal{A}_{\alpha}$ is one of the following two regions:%
\[
\left\{  s:\left(  2\alpha-1\right)  /\alpha^{2}<s\leq4\left(  2\alpha
-1\right)  /\left(  \alpha\left(  \alpha+1\right)  \right)  ^{2},\text{ for
}\alpha\in\left(  0,1/2\right)  \right\}  ,
\]
or%
\[
\left\{  s:4\left(  2\alpha-1\right)  /\left(  \alpha\left(  \alpha+1\right)
\right)  ^{2}\leq s<\left(  2\alpha-1\right)  /\alpha^{2},\text{ for }%
\alpha\in\left(  1/2,\infty\right)  \backslash\left\{  1\right\}  \right\}  .
\]
For more details, regarding the construction of these two sets, one refers to
Remark 2.1 and Lemma 3.1 in \cite{GHP2003}. It is worth mentioning that, for
$\alpha=2,$ we have $s_{2}\left(  \rho_{1}\right)  =\left(  3\rho_{1}%
^{2}-8\rho_{1}+6\right)  /\left(  3-2\rho_{1}\right)  ^{2}$ and $s_{2}%
^{\leftarrow}\left(  s\right)  =\left(  6s-4+\sqrt{3s-2}\right)  /\left(
4s-3\right)  ,$ for $2/3<s<3/4.$ Thereby, we obtain an explicit formula to the
estimator of $\rho_{1}$ as follows%
\begin{equation}
\widehat{\rho}_{1}^{\left(  2\right)  }=\frac{6S_{n}^{\left(  2\right)
}-4+\sqrt{3S_{n}^{\left(  2\right)  }-2}}{4S_{n}^{\left(  2\right)  }%
-3},\text{ provided that }2/3<S_{n}^{\left(  2\right)  }<3/4. \label{rho1bis}%
\end{equation}
Next, we derive an asymptotically unbiased estimator for $\gamma_{1},$ that
improves $\widehat{\gamma}_{1}^{\left(  BMN\right)  }$ by estimating the
asymptotic bias $\mathbf{A}_{0}\left(  n/k\right)  /\left(  1-\rho_{1}\right)
,$ given in weak approximation $(\ref{approx}).$ Indeed, let $\upsilon$ be
equal to $u_{n}:=\left[  n^{1-\epsilon}\right]  ,$ for a fixed $\epsilon>0$
close to zero (say $\epsilon=0.01)$ so that, given $n=m,$ $u_{m}%
\rightarrow\infty,$ $u_{m}/m\rightarrow\infty$ and $\sqrt{u_{m}}\left\vert
\mathbf{A}_{0}\left(  m/u_{m}\right)  \right\vert \rightarrow\infty.$ The
validity of such a sequence is discussed in \cite{GHP2003} (Subsection 6.1,
conclusions 2 and 5). The estimator of $\rho_{1}$ pertaining to this choice of
$\upsilon$ will be denoted by $\widehat{\rho}_{1}^{\left(  \ast\right)  }.$ We
are now is proposition to define an estimator for $\mathbf{A}_{0}\left(
n/k\right)  .$ From assertion $(i)$ in Lemma $\ref{lemma1},$ we have
$\mathbf{A}_{0}\left(  t\right)  \sim\left(  1-\rho_{1}\right)  ^{2}\left(
M^{\left(  2\right)  }\left(  t;\mathbf{F}\right)  -2\left(  M^{\left(
1\right)  }\left(  t;\mathbf{F}\right)  \right)  ^{2}\right)  /\left(
2\rho_{1}M^{\left(  1\right)  }\left(  t;\mathbf{F}\right)  \right)  ,$ as
$t\rightarrow\infty.$ Then, by letting $t=n/k$ and by replacing, in the
previous quantity, $\mathbb{U}_{F^{\ast}}\left(  n/k\right)  $ by $X_{n-k:n},$
$\mathbf{F}$ by $\mathbf{F}_{n}$ and $\rho_{1}$ by $\widehat{\rho}%
_{1}^{\left(  \ast\right)  },$ we end up with
\[
\widehat{\mathbf{A}}_{0}\left(  n/k\right)  :=\left(  1-\widehat{\rho}%
_{1}^{\left(  \ast\right)  }\right)  ^{2}\left(  M_{n}^{\left(  2\right)
}\left(  k\right)  -2\left(  M_{n}^{\left(  1\right)  }\left(  k\right)
\right)  ^{2}\right)  /\left(  2\widehat{\rho}_{1}^{\left(  \ast\right)
}M_{n}^{\left(  1\right)  }\left(  k\right)  \right)  ,
\]
as an estimator for $\mathbf{A}_{0}\left(  n/k\right)  .$ Thus, we obtain an
asymptotically unbiased estimator%
\[
\widehat{\gamma}_{1}:=M_{n}^{\left(  1\right)  }\left(  k\right)  +\frac
{M_{n}^{\left(  2\right)  }\left(  k\right)  -2\left(  M_{n}^{\left(
1\right)  }\left(  k\right)  \right)  ^{2}}{2M_{n}^{\left(  1\right)  }\left(
k\right)  }\left(  1-\frac{1}{\widehat{\rho}_{1}^{\left(  \ast\right)  }%
}\right)  ,
\]
for the tail index $\gamma_{1},$ as an adaptation of Peng's estimator
\citep{peng98} to the random right-truncation case. The rest of the paper is
organized as follows. In Section \ref{sec2}, we present our main results which
consist in the consistency and the asymptotic normality of the estimators
$\widehat{\rho}_{1}^{\left(  \alpha\right)  }$ and $\widehat{\gamma}_{1}$
whose finite sample behaviours are checked by simulation in Section
\ref{sec3}. All proofs are gathered in Section \ref{sec4}. Two instrumental
lemmas are given in the Appendix.

\section{\textbf{Main results\label{sec2}}}

\noindent It is well known that, weak approximations of the second-order
parameter estimators are achieved in the third-order condition of regular
variation framework \citep[see, e.g.,
][]{deHS96}. Thus, it seems quite natural to suppose that df $\mathbf{F}%
$\ satisfies%
\begin{equation}
\lim_{t\rightarrow\infty}\left\{  \dfrac{\mathbb{U}_{\mathbf{F}}\left(
tx\right)  /\mathbb{U}_{\mathbf{F}}\left(  t\right)  -x^{\gamma_{1}}%
}{\mathbf{A}_{\mathbf{F}}\left(  t\right)  }-x^{\gamma_{1}}\dfrac{x^{\rho_{1}%
}-1}{\rho_{1}}\right\}  /\mathbf{B}_{\mathbf{F}}\left(  t\right)
=\frac{x^{\gamma_{1}}}{\rho_{1}}\left(  \dfrac{x^{\rho_{1}+\beta_{1}}-1}%
{\rho_{1}+\beta_{1}}-\dfrac{x^{\rho_{1}}-1}{\rho_{1}}\right)  , \label{Third}%
\end{equation}
where $\beta_{1}<0$ is the third-order parameter and $\mathbf{B}_{\mathbf{F}}%
$\ is a function tending to zero and not changing sign near infinity with
regularly varying absolute value at infinity with index $\beta_{1}.$ For
convenience, we set $\mathbf{B}_{0}\left(  t\right)  :=\mathbf{B}_{\mathbf{F}%
}\left(  1/\overline{\mathbf{F}}\left(  \mathbb{U}_{F^{\ast}}\left(  t\right)
\right)  \right)  $ and by keeping similar notations to those used in
\cite{GHP2003}, we write
\begin{equation}
\mu_{\alpha}^{\left(  1\right)  }:=\Gamma\left(  \alpha+1\right)  ,\text{ }%
\mu_{\alpha}^{\left(  2\right)  }\left(  \rho_{1}\right)  :=\frac
{\Gamma\left(  \alpha\right)  \left(  1-\left(  1-\rho_{1}\right)  ^{\alpha
}\right)  }{\rho_{1}\left(  1-\rho_{1}\right)  ^{\alpha}}, \label{mu1-2}%
\end{equation}%
\[
\mu_{\alpha}^{\left(  3\right)  }\left(  \rho_{1}\right)  :=\left\{
\begin{tabular}
[c]{ll}%
$\dfrac{1}{\rho_{1}^{2}}\log\dfrac{\left(  1-\rho_{1}\right)  ^{2}}%
{1-2\rho_{1}}$ & if $\alpha=1,$\\
$\dfrac{\Gamma\left(  \alpha\right)  }{\rho_{1}^{2}\left(  \alpha-1\right)
}\left\{  \dfrac{1}{\left(  1-2\rho_{1}\right)  ^{\alpha-1}}-\dfrac{2}{\left(
1-\rho_{1}\right)  ^{\alpha-1}}+1\right\}  $ & if $\alpha\neq1,$%
\end{tabular}
\ \ \ \ \ \ \right.  \medskip
\]%
\[
\mu_{\alpha}^{\left(  4\right)  }\left(  \rho_{1},\beta_{1}\right)
:=\beta_{1}^{-1}\left(  \mu_{\alpha}^{\left(  2\right)  }\left(  \rho
_{1}+\beta_{1}\right)  -\mu_{\alpha}^{\left(  2\right)  }\left(  \rho
_{1}\right)  \right)  ,
\]%
\[
m_{\alpha}:=\mu_{\alpha}^{\left(  2\right)  }\left(  \rho_{1}\right)
-\mu_{\alpha}^{\left(  1\right)  }\mu_{1}^{\left(  2\right)  }\left(  \rho
_{1}\right)  ,\text{ }c_{\alpha}:=\mu_{\alpha}^{\left(  3\right)  }\left(
\rho_{1}\right)  -\mu_{\alpha}^{\left(  1\right)  }\left(  \mu_{1}^{\left(
2\right)  }\left(  \rho_{1}\right)  \right)  ^{2}%
\]
and $d_{\alpha}:=\mu_{\alpha}^{\left(  4\right)  }\left(  \rho_{1},\beta
_{1}\right)  -\mu_{\alpha}^{\left(  1\right)  }\mu_{1}^{\left(  4\right)
}\left(  \varrho_{1},\beta_{1}\right)  .$ For further use, we set $r_{\alpha
}:=2q_{\alpha}\gamma_{1}^{2-\alpha}\Gamma\left(  \alpha+1\right)  ,$
\[
\eta_{1}:=\frac{1}{2\gamma_{1}m_{2}r_{\alpha+1}^{2}}\left(  \frac{\left(
2\alpha-1\right)  c_{2\alpha}}{\Gamma\left(  2\alpha\right)  }+c_{2}%
r_{2\alpha}-\frac{2c_{\alpha+1}r_{2\alpha}}{r_{\alpha+1}\Gamma\left(
\alpha\right)  }\right)  ,
\]%
\[
\eta_{2}:=\frac{1}{\gamma_{1}m_{2}r_{\alpha+1}^{2}}\frac{d_{2\alpha}}%
{\Gamma\left(  2\alpha\right)  }+d_{2}r_{2\alpha}-\frac{2d_{\alpha
+1}r_{2\alpha}}{r_{\alpha+1}\Gamma\left(  \alpha+1\right)  },
\]%
\[
\xi:=\gamma\left(  \frac{1-2\alpha-3r_{2\alpha}}{r_{\alpha+1}^{2}m_{2}}%
+\frac{2\alpha r_{2\alpha}}{r_{\alpha+1}^{3}m_{2}}\right)  ,
\]%
\[
\tau_{1}:=\frac{1}{\gamma_{1}^{2\alpha-1}r_{\alpha+1}^{2}\Gamma\left(
2\alpha+1\right)  m_{2}},\text{ }\tau_{2}:=-\frac{2r_{2\alpha}}{\gamma
_{1}^{\alpha}r_{\alpha+1}^{3}\Gamma\left(  \alpha+2\right)  m_{2}},
\]%
\[
\tau_{3}:=\frac{r_{2\alpha}}{\gamma_{1}r_{\alpha+1}^{2}2m_{2}},\text{ }%
\tau_{4}:=\frac{-2\alpha r_{\alpha+1}+2\left(  \alpha+1\right)  r_{2\alpha
}-4r_{\alpha+1}r_{2\alpha}}{r_{\alpha+1}^{3}m_{2}},
\]%
\[
\tau_{5}:=\frac{\rho_{1}-1}{2\gamma_{1}\rho_{1}},\text{ }\tau_{6}%
:=1+2\frac{1-\rho_{1}}{\gamma_{1}\rho_{1}}\text{ and }\mu:=\gamma\left(
2+2\frac{1-\rho_{1}}{\gamma_{1}\rho_{1}}-\frac{1}{\rho_{1}}\right)  .
\]

\begin{theorem}
\label{Theorem1}Assume that both df's $\mathbf{F}$ and $\mathbf{G}$ satisfy
the second-order conditions $(\ref{second-order})$ and $(\ref{second-orderG})$
respectively with $\gamma_{1}<\gamma_{2}.$ Let $\alpha,$ defined in
$(\ref{rho1}),$ be fixed and let $\upsilon$ be a random sequence of integers
such that, given $n=m,$ $\upsilon_{m}\rightarrow\infty,$ $\upsilon
_{m}/m\rightarrow0$ and $\sqrt{\upsilon_{m}}\left\vert \mathbf{A}_{0}\left(
m/\upsilon_{m}\right)  \right\vert \rightarrow\infty,$ then
\[
\widehat{\rho}_{1}^{\left(  \alpha\right)  }\overset{\mathbf{P}}{\rightarrow
}\rho_{1},\text{ as }N\rightarrow\infty.
\]
If in addition, we assume that the third-order condition $(\ref{Third})$
holds, then whenever, given $n=m,$ $\sqrt{\upsilon_{m}}\mathbf{A}_{0}%
^{2}\left(  m/\upsilon_{m}\right)  $ and $\sqrt{\upsilon_{m}}\mathbf{A}%
_{0}\left(  m/\upsilon_{m}\right)  \mathbf{B}_{0}\left(  m/\upsilon
_{m}\right)  $ are asymptotically bounded, then there exists a standard Wiener
process $\left\{  \mathbf{W}\left(  s\right)  ;\text{ }s\geq0\right\}
,$\ defined on the probability space $\left(  \Omega,\mathcal{A}%
,\mathbf{P}\right)  ,$ such that%
\begin{align*}
s_{\alpha}^{\prime}\left(  \rho_{1}\right)  \sqrt{\upsilon}\mathbf{A}%
_{0}\left(  n/\upsilon\right)  \left(  \widehat{\rho}_{1}^{\left(
\alpha\right)  }-\rho_{1}\right)   &  =\int_{0}^{1}s^{-\gamma/\gamma_{2}%
-1}\Delta_{\alpha}(s)\mathbf{W}\left(  s\right)  ds-\xi\mathbf{W}\left(
1\right) \\
&  +\eta_{1}\sqrt{\upsilon}\mathbf{A}_{0}^{2}\left(  n/\upsilon\right)
+\eta_{2}\sqrt{\upsilon}\mathbf{A}_{0}\left(  n/\upsilon\right)
\mathbf{B}_{0}\left(  n/\upsilon\right)  +o_{\mathbf{P}}\left(  1\right)  ,
\end{align*}
where $s_{\alpha}^{\prime}$ is the Lebesgue derivative of $s_{\alpha}$ given
in $(\ref{s})$ and%
\begin{align*}
&  \Delta_{\alpha}(s)%
\begin{tabular}
[c]{l}%
$:=$%
\end{tabular}
\ \ \ \ \frac{\tau_{1}\gamma\log^{2\alpha}s^{-\gamma}}{\gamma_{1}+\gamma_{2}%
}+\frac{2\alpha\tau_{1}\gamma^{2}\log^{2\alpha-1}s^{-\gamma}}{\gamma_{1}%
}+\frac{\tau_{2}\gamma\log^{\alpha+1}s^{-\gamma}}{\gamma_{1}+\gamma_{2}}\\
\ \ \ \ \  &  \ \ \ \ \ \ \ \ \ \ \ \ \ +\frac{\tau_{2}\left(  \alpha
+1\right)  \gamma^{2}\log^{\alpha}s^{-\gamma}}{\gamma_{1}}+\frac{\tau
_{3}\gamma\log^{2}s^{-\gamma}}{\gamma_{1}+\gamma_{2}}\\
&  \ \ \ \ \ \ \ \ \ \ \ \ \ +\left(  \frac{2\tau_{3}\gamma^{2}}{\gamma_{1}%
}+\frac{\tau_{4}\gamma}{\gamma_{1}+\gamma_{2}}\right)  \log s^{-\gamma}%
+\frac{\tau_{4}\gamma^{2}}{\gamma_{1}}-\frac{\gamma_{1}\xi}{\gamma_{1}%
+\gamma_{2}}%
\end{align*}
If, in addition, we suppose that given $n=m,$
\[
\sqrt{\upsilon_{m}}\mathbf{A}_{0}^{2}\left(  m/\upsilon_{m}\right)
\rightarrow\lambda_{1}<\infty\text{ and }\sqrt{\upsilon_{m}}\mathbf{A}%
_{0}\left(  m/\upsilon_{m}\right)  \mathbf{B}_{0}\left(  m/\upsilon
_{m}\right)  \rightarrow\lambda_{2}<\infty,
\]
then $\sqrt{\upsilon}\mathbf{A}_{0}\left(  n/\upsilon\right)  \left(
\widehat{\rho}_{1}^{\left(  \alpha\right)  }-\rho_{1}\right)  \overset
{\mathcal{D}}{\rightarrow}\mathcal{N}\left(  \eta_{1}\lambda_{1}+\eta
_{2}\lambda_{2},\sigma_{\alpha}^{2}\right)  ,$ as $N\rightarrow\infty,$ where
\[
\sigma_{\alpha}^{2}:=%
{\displaystyle\int_{0}^{1}}
{\displaystyle\int_{0}^{1}}
s^{-\gamma/\gamma_{2}-1}t^{-\gamma/\gamma_{2}-1}\min\left(  s,t\right)
\Delta_{\alpha}(s)\Delta_{\alpha}(t)dsdt-2\xi%
{\displaystyle\int_{0}^{1}}
s^{-\gamma/\gamma_{2}}\Delta_{\alpha}(s)ds+\xi^{2}.
\]

\end{theorem}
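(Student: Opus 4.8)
The plan is to reduce everything to the asymptotic analysis of $S_n^{(\alpha)}$, since $\widehat{\rho}_1^{(\alpha)}=s_\alpha^\leftarrow(S_n^{(\alpha)})$ and, on the relevant region $\mathcal{A}_\alpha$, $s_\alpha$ is continuous and strictly monotone with nonvanishing derivative $s_\alpha'$. For the consistency statement I would first prove $S_n^{(\alpha)}\overset{\mathbf{P}}{\rightarrow}s_\alpha(\rho_1)$ and then apply continuity of $s_\alpha^\leftarrow$. Note that $S_n^{(\alpha)}$ is a smooth function of the four empirical moments $M_n^{(1)}(\upsilon)$, $M_n^{(2)}(\upsilon)$, $M_n^{(\alpha+1)}(\upsilon)$, $M_n^{(2\alpha)}(\upsilon)$ through the ratios $Q_n^{(\beta)}$, so it suffices to control each $M_n^{(\beta)}(\upsilon)$. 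Splitting $M_n^{(\beta)}(\upsilon)=M^{(\beta)}(n/\upsilon;\mathbf{F})+\big(M_n^{(\beta)}(\upsilon)-M^{(\beta)}(n/\upsilon;\mathbf{F})\big)$, the deterministic part converges by assertion $(ii)$ of Lemma~\ref{lemma1} and the stochastic part is $O_{\mathbf{P}}(\upsilon^{-1/2})$. Since the numerators and denominators of the $Q_n^{(\beta)}$'s are built so that the order-one terms cancel, both are of exact order $\mathbf{A}_0(n/\upsilon)$; the relative stochastic error of $Q_n^{(\beta)}$ is therefore $O_{\mathbf{P}}(\upsilon^{-1/2}/\mathbf{A}_0(n/\upsilon))=o_{\mathbf{P}}(1)$ exactly under $\sqrt{\upsilon_m}|\mathbf{A}_0(m/\upsilon_m)|\rightarrow\infty$. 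This gives $S_n^{(\alpha)}\overset{\mathbf{P}}{\rightarrow}s_\alpha(\rho_1)$, hence the first assertion, after conditioning on $n=m$ and transferring by the total probability formula to absorb the randomness of $n$ and $\upsilon$.

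For the weak approximation I would follow the scheme of \cite{GHP2003} with the empirical tail replaced by the product-limit construction. The key probabilistic input is the Gaussian approximation of the tail product-limit process of \cite{BchMN-16a}: there is a Wiener process $\mathbf{W}$ for which, uniformly in $x\ge 1$, $\sqrt{\upsilon}\big(\overline{\mathbf{F}}_n(X_{n-\upsilon:n}x)/\overline{\mathbf{F}}_n(X_{n-\upsilon:n})-x^{-1/\gamma_1}\big)$ is approximated by an explicit linear functional of $\mathbf{W}$. Substituting this into (\ref{Mn}) and integrating (by parts, after the change of variable $x\mapsto X_{n-\upsilon:n}x$) yields, for each $\beta\in\{1,2,\alpha+1,2\alpha\}$, a representation
\[
M_n^{(\beta)}(\upsilon)=M^{(\beta)}(n/\upsilon;\mathbf{F})+\upsilon^{-1/2}\Lambda_\beta(\mathbf{W})+o_{\mathbf{P}}(\upsilon^{-1/2}),
\]
with $\Lambda_\beta$ a centred Gaussian functional of $\mathbf{W}$; the case $\beta=1$ is precisely (\ref{approxi}). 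Combined with the third-order deterministic expansions of $M^{(\beta)}(\cdot;\mathbf{F})$ from assertion $(i)$ of Lemma~\ref{lemma1} — whose coefficients are the $\mu_\beta^{(1)}$, $\mu_\beta^{(2)}$, $\mu_\beta^{(3)}$, $\mu_\beta^{(4)}$ and whose $\mathbf{A}_0^2$ and $\mathbf{A}_0\mathbf{B}_0$ terms require condition (\ref{Third}) — this produces a joint expansion of the four moments in which the order-one terms will cancel, the $\mathbf{A}_0$ terms will generate the limits $q_\beta$ and $s_\alpha$, and the $\mathbf{A}_0^2$, $\mathbf{A}_0\mathbf{B}_0$ and $\upsilon^{-1/2}\mathbf{W}$ terms will carry the bias and the fluctuation of $S_n^{(\alpha)}$.

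The analytic core is a two-stage delta method. First I would linearize each $Q_n^{(\beta)}$ about $q_\beta(\rho_1)$: because its numerator and denominator are both $O(\mathbf{A}_0)$ once the $\gamma_1^\beta\Gamma(\beta+1)$ pieces are removed, a first-order expansion of the quotient brings in the centred constants $m_\beta$, $c_\beta$, $d_\beta$ and the scale factors $r_\beta=2q_\beta\gamma_1^{2-\beta}\Gamma(\beta+1)$, and it amplifies the stochastic term to order $\upsilon^{-1/2}/\mathbf{A}_0$. Second, I would linearize $S_n^{(\alpha)}=\delta(\alpha)Q_n^{(2\alpha)}/(Q_n^{(\alpha+1)})^2$ about $s_\alpha(\rho_1)$; collecting the stochastic contributions assembles the weight $\Delta_\alpha(s)$ and the boundary coefficient $\xi$, while the $\mathbf{A}_0^2$ and $\mathbf{A}_0\mathbf{B}_0$ contributions assemble $\eta_1$ and $\eta_2$. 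Multiplying through by $\sqrt{\upsilon}\mathbf{A}_0(n/\upsilon)$ — which turns the $\upsilon^{-1/2}/\mathbf{A}_0$ stochastic term into the $O_{\mathbf{P}}(1)$ Gaussian term $\int_0^1 s^{-\gamma/\gamma_2-1}\Delta_\alpha(s)\mathbf{W}(s)ds-\xi\mathbf{W}(1)$ and the $\mathbf{A}_0$-order deterministic bias into the $\sqrt{\upsilon}\mathbf{A}_0^2$ and $\sqrt{\upsilon}\mathbf{A}_0\mathbf{B}_0$ terms — and inverting with $(s_\alpha^\leftarrow)'(s_\alpha(\rho_1))=1/s_\alpha'(\rho_1)$, which produces the factor $s_\alpha'(\rho_1)$ on the left, gives the displayed identity. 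The asymptotic normality then follows at once: the functional on the right is centred Gaussian, and computing its variance from $\mathbf{E}[\mathbf{W}(s)\mathbf{W}(t)]=\min(s,t)$ yields the double integral, the cross term $-2\xi\int_0^1 s^{-\gamma/\gamma_2}\Delta_\alpha(s)ds$ and $\xi^2$, i.e.\ $\sigma_\alpha^2$, while $\sqrt{\upsilon_m}\mathbf{A}_0^2\rightarrow\lambda_1$ and $\sqrt{\upsilon_m}\mathbf{A}_0\mathbf{B}_0\rightarrow\lambda_2$ turn the bias into $\eta_1\lambda_1+\eta_2\lambda_2$.

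I expect the main obstacle to be the bookkeeping of this two-stage linearization: one must carry the third-order deterministic expansion and the Gaussian approximation simultaneously through the nonlinear maps $Q$ and $S$, verify that every order-one and every first-order $\mathbf{A}_0$ term cancels or regroups exactly, and check that the surviving stochastic and bias pieces coincide with $\Delta_\alpha$, $\xi$, $\eta_1$, $\eta_2$ as defined before the statement. The probabilistic content — the Gaussian approximation and the $O_{\mathbf{P}}(\upsilon^{-1/2})$ bounds — is comparatively routine once (\ref{approxi}) and Lemma~\ref{lemma1} are available; it is the algebra of the constants, together with the correct identification of the $\upsilon^{-1/2}/\mathbf{A}_0$ scaling, that carries the real difficulty.
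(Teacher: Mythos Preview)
Your proposal is correct and follows essentially the same route as the paper's proof: expand each moment $M_n^{(\beta)}(\upsilon)$ using the uniform Gaussian approximation of the tail product-limit process from \cite{BchMN-16a} together with the second- and third-order regular variation conditions, then linearize through $Q$ and $S$, and invert via $s_\alpha^\leftarrow$ by the mean value theorem. The only organizational difference is that the paper carries out the linearization by introducing intermediate quantities $\widetilde{M}_n^{(\beta)},\widetilde{Q}_n^{(\beta)},\widetilde{S}_n^{(\alpha)}$ (built with the random threshold $X_{n-\upsilon:n}$ but the true $\mathbf{F}$) and decomposing $S_n^{(\alpha)}-s_\alpha(\rho_1)=T_{n1}+T_{n2}+T_{n3}$, which cleanly separates the estimation error $S_n^{(\alpha)}-\widetilde{S}_n^{(\alpha)}$ (producing the Gaussian term and the $\eta_2$ contribution) from the pure threshold bias $\widetilde{S}_n^{(\alpha)}-s_\alpha(\rho_1)$ (producing $\eta_1$); this is equivalent to your direct two-stage delta method.
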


\begin{theorem}
\label{Theorem2} Let $k$ be a random sequence of integers, different from
$\upsilon,$ such that, given $n=m,$ $k_{m}\rightarrow\infty,$ $k_{m}%
/m\rightarrow0$ and $\sqrt{k_{m}}\mathbf{A}_{0}\left(  m/k_{m}\right)  $ is
asymptotically bounded, then with the same Wiener process $\left\{
\mathbf{W}\left(  s\right)  ;\text{ }s\geq0\right\}  ,$\ for any $\epsilon>0,$
we have%
\[
\sqrt{k}\left(  \widehat{\gamma}_{1}-\gamma_{1}\right)  =\int_{0}%
^{1}s^{-\gamma/\gamma_{2}-1}\mathbf{D}(s)\mathbf{W}\left(  s\right)
ds-\mu\mathbf{W}\left(  1\right)  +o_{\mathbf{P}}\left(  1\right)  ,
\]
where%
\[
\mathbf{D}(s):=\frac{\gamma^{3}\tau_{5}}{\gamma_{1}+\gamma_{2}}\log
^{2}s-\left(  \frac{2\tau_{5}\gamma^{3}}{\gamma_{1}}+\frac{\gamma^{2}\tau_{6}%
}{\gamma_{1}+\gamma_{2}}\right)  \log s+\frac{\tau_{6}\gamma^{2}}{\gamma_{1}%
}-\frac{\gamma_{1}\mu}{\gamma_{1}+\gamma_{2}}.
\]
If, in addition, we suppose that, given $n=m,$ $\sqrt{k_{m}}\mathbf{A}%
_{0}\left(  m/k_{m}\right)  \rightarrow\lambda<\infty,$ then
\[
\sqrt{k}\left(  \widehat{\gamma}_{1}-\gamma_{1}\right)  \overset{\mathcal{D}%
}{\rightarrow}\mathcal{N}\left(  0,\sigma_{\ast}^{2}\right)  ,\ \text{as
}N\rightarrow\infty,
\]
where $\sigma_{\ast}^{2}:=%
{\displaystyle\int_{0}^{1}}
{\displaystyle\int_{0}^{1}}
s^{-\gamma/\gamma_{2}-1}t^{-\gamma/\gamma_{2}-1}\min\left(  s,t\right)
\mathbf{D}(s)\mathbf{D}(t)dsdt-2\mu%
{\displaystyle\int_{0}^{1}}
s^{-\gamma/\gamma_{2}}\mathbf{D}(s)ds+\mu^{2}.$
\end{theorem}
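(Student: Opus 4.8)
The plan is to express $\widehat{\gamma}_{1}$ as a smooth functional of the triple $\bigl(M_n^{(1)}(k),M_n^{(2)}(k),\widehat{\rho}_1^{(\ast)}\bigr)$ and to propagate a Gaussian approximation of the tail functionals $M_n^{(\alpha)}(k)$ through this functional by a delta-method argument. First I would record, exactly as in the derivation of Theorem \ref{Theorem1} but with the sequence $k$ in place of $\upsilon$ and only $\alpha\in\{1,2\}$, the joint weak approximation
\[
\sqrt{k}\bigl(M_n^{(\alpha)}(k)-M^{(\alpha)}(n/k;\mathbf{F})\bigr)=\int_0^1\mathcal{K}_\alpha(s)\mathbf{W}(s)\,ds+(\text{boundary term})+o_{\mathbf{P}}(1),
\]
driven by one and the same Wiener process $\mathbf{W}$ supplied by the Gaussian approximation of the tail product-limit process of Benchaira et al.; here the kernels $\mathcal{K}_1,\mathcal{K}_2$ carry the $\log^{\,\cdot}$ weights inherited from the definition $(\ref{Mt})$, and the deterministic centring expands, via the second-order condition and assertion $(i)$ of Lemma \ref{lemma1}, as $M^{(\alpha)}(n/k;\mathbf{F})=\gamma_1^{\alpha}\Gamma(\alpha+1)+\text{(const)}\,\mathbf{A}_0(n/k)+O(\mathbf{A}_0^2(n/k))+O(\mathbf{A}_0(n/k)\mathbf{B}_0(n/k))$. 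The case $\alpha=1$ is precisely $(\ref{approxi})$ with $M_n^{(1)}(k)=\widehat{\gamma}_1^{(BMN)}$. The common Wiener process is what makes the two approximations jointly valid and yields the correct covariance structure.

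Next I would dispose of the error committed by using $\widehat{\rho}_1^{(\ast)}$ in place of $\rho_1$. Since the factor multiplying $\bigl(1-1/\widehat{\rho}_1^{(\ast)}\bigr)$ equals $\bigl(M_n^{(2)}(k)-2(M_n^{(1)}(k))^2\bigr)/\bigl(2M_n^{(1)}(k)\bigr)=O_{\mathbf{P}}\!\bigl(\mathbf{A}_0(n/k)\bigr)=O_{\mathbf{P}}(k^{-1/2})$ and since $\widehat{\rho}_1^{(\ast)}\overset{\mathbf{P}}{\rightarrow}\rho_1$ by the first part of Theorem \ref{Theorem1}, the contribution of $\widehat{\rho}_1^{(\ast)}-\rho_1$ to $\sqrt{k}(\widehat{\gamma}_1-\gamma_1)$ is $O_{\mathbf{P}}\bigl(\widehat{\rho}_1^{(\ast)}-\rho_1\bigr)=o_{\mathbf{P}}(1)$, so $\widehat{\rho}_1^{(\ast)}$ may be frozen at $\rho_1$ throughout. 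It then remains to analyse $\widehat{\gamma}_1=\varphi\bigl(M_n^{(1)}(k),M_n^{(2)}(k)\bigr)$, where $\varphi(m_1,m_2):=m_1+\bigl(m_2-2m_1^2\bigr)\bigl(1-1/\rho_1\bigr)/(2m_1)$, by a first-order Taylor expansion about the limit point $(\gamma_1,2\gamma_1^2)$. A direct computation gives the partials there, in particular $\partial_{m_2}\varphi=(\rho_1-1)/(2\gamma_1\rho_1)=\tau_5$, and the coefficient function is read off as $\mathbf{D}(s)=\partial_{m_1}\varphi\,\mathcal{K}_1(s)+\partial_{m_2}\varphi\,\mathcal{K}_2(s)$, the $\mathbf{W}(1)$ contributions of $\mathcal{K}_1,\mathcal{K}_2$ assembling into $-\mu\mathbf{W}(1)$; this is the route by which $\tau_5,\tau_6,\mu$ and the $\log^2 s$, $\log s$ structure of $\mathbf{D}(s)$ emerge.

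The decisive point is the bias cancellation. Using assertion $(i)$ of Lemma \ref{lemma1}, the $\mathbf{A}_0(n/k)$-proportional parts of the deterministic centrings of $M_n^{(1)}(k)$ and $M_n^{(2)}(k)$ combine so that the leading bias $\mathbf{A}_0(n/k)/(1-\rho_1)$ carried by $M_n^{(1)}(k)$ in $(\ref{approxi})$ is exactly removed by the correction term; the residual deterministic error is of order $\sqrt{k}\,\mathbf{A}_0^2(n/k)+\sqrt{k}\,\mathbf{A}_0(n/k)\mathbf{B}_0(n/k)$, which is $o(1)$ under the single hypothesis $\sqrt{k}\,\mathbf{A}_0(n/k)\rightarrow\lambda<\infty$, since then $\mathbf{A}_0(n/k)=O(k^{-1/2})$ and $\mathbf{B}_0(n/k)\to 0$. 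This is precisely why the limiting law is centred at $0$ rather than at a nonzero multiple of $\lambda$. With the weak approximation established, asymptotic normality follows because $\int_0^1 s^{-\gamma/\gamma_2-1}\mathbf{D}(s)\mathbf{W}(s)\,ds-\mu\mathbf{W}(1)$ is a centred Gaussian random variable, whose variance $\sigma_\ast^2$ is computed directly from $\mathbf{E}\left[\mathbf{W}(s)\mathbf{W}(t)\right]=\min(s,t)$, giving the stated double integral. I expect the main obstacle to be the first step: establishing the joint Gaussian approximation of $\bigl(M_n^{(1)}(k),M_n^{(2)}(k)\bigr)$ with explicit kernels driven by a common $\mathbf{W}$, and tracking the second-order centrings finely enough to exhibit the exact cancellation of the first-order bias.
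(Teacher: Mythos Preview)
Your proposal is correct and follows essentially the same route as the paper's proof: freeze $\widehat{\rho}_1^{(\ast)}$ at $\rho_1$ using consistency and the fact that the factor it multiplies is $O_{\mathbf{P}}(k^{-1/2})$; invoke the Gaussian approximation $(\ref{aproxi-M-alpha})$ for $M_n^{(\alpha)}(k)$ with $\alpha=1,2$ driven by the common process $\mathcal{L}(\cdot;\mathbf{W})$; combine these linearly (the paper writes the resulting integrand as $d\Psi(x)$ with $\Psi(x)=\tau_6\log x+\tau_5\log^2 x$, exactly your $\partial_{m_1}\varphi\,\mathcal{K}_1+\partial_{m_2}\varphi\,\mathcal{K}_2$); and then integrate by parts and change variables to reach $\mathbf{D}(s)$.

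One simplification you can adopt from the paper: there is no need to bring in the third-order condition or $\mathbf{B}_0$ for this theorem. The second-order approximation $(\ref{aproxi-M-alpha})$ already records the bias as $\alpha\gamma_1^{\alpha-1}\mu_\alpha^{(2)}(\rho_1)\mathbf{A}_0(n/k)\bigl(1+o_{\mathbf{P}}(1)\bigr)$, and after the exact cancellation of the leading $\mathbf{A}_0(n/k)$ contributions the remainder is $\sqrt{k}\,\mathbf{A}_0(n/k)\cdot o_{\mathbf{P}}(1)=o_{\mathbf{P}}(1)$ directly from the boundedness hypothesis on $\sqrt{k}\,\mathbf{A}_0(n/k)$. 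So your ``residual deterministic error of order $\sqrt{k}\,\mathbf{A}_0^2+\sqrt{k}\,\mathbf{A}_0\mathbf{B}_0$'' is true but unnecessary here; the paper gets by with the second-order machinery alone.
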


\section{\textbf{Simulation study}\label{sec3}}

\noindent In this section, we study the performance of $\widehat{\rho}%
_{1}^{\left(  \alpha\right)  }$ (for $\alpha=2)$ and compare the newly
introduced bias-reduced estimator $\widehat{\gamma}_{1}$ with $\widehat
{\gamma}_{1}^{\left(  BMN\right)  }.$ Let us consider two sets of truncated
and truncation data respectively drawn from Burr's models, $\overline
{\mathbf{F}}\left(  x\right)  =\left(  1+x^{1/\delta}\right)  ^{-\delta
/\gamma_{1}}$ and $\overline{\mathbf{G}}\left(  x\right)  =\left(
1+x^{1/\delta}\right)  ^{-\delta/\gamma_{2}},$ $x\geq0,$ where $\delta
,\gamma_{1},\gamma_{2}>0.$ By elementary analysis, it is easy to verify that
$\overline{\mathbf{F}}$ satisfies the third-order condition $(\ref{Third})$
with $\rho_{1}=\beta_{1}=-\gamma_{1}/\delta,$ $\mathbf{A}_{\mathbf{F}}\left(
x\right)  =\gamma_{1}x^{\rho_{1}}/\left(  1-x^{\rho_{1}}\right)  $ and
$\mathbf{B}_{\mathbf{F}}\left(  x\right)  =\left(  \delta/\gamma_{1}\right)
\mathbf{A}_{\mathbf{F}}\left(  x\right)  .$ We fix $\delta=1/4$ and choose the
values $0.6$ and $0.8$ for $\gamma_{1}$ and $70\%$ and $90\%$ for the
percentage of observed data $p=\gamma_{2}/(\gamma_{1}+\gamma_{2}).$ For each
couple $\left(  \gamma_{1},p\right)  ,$ we solve the latter equation to get
the pertaining $\gamma_{2}$-value. We vary the common size $N$ of both samples
$\left(  \mathbf{X}_{1},...,\mathbf{X}_{N}\right)  $ and $\left(
\mathbf{Y}_{1},...,\mathbf{Y}_{N}\right)  ,$ then for each size, we generate
$1000$ independent replicates. For the selection of the optimal numbers
$\upsilon^{\ast}$ and $k^{\ast}$ of upper order statistics used in the
computation of estimators $\widehat{\rho}_{1}^{\left(  2\right)  },$
$\widehat{\gamma}_{1}$ and $\widehat{\gamma}_{1}^{\left(  BMN\right)  },$ we
apply the algorithm of \cite{ReTo7}, page 137. Our illustration and
comparison, made with respect to the absolute biases (abias) and rmse's, are
summarized in Tables \ref{Tab1} and \ref{Tab2}. The obtained results, in Table
\ref{Tab1}, show that $\widehat{\rho}_{1}^{\left(  2\right)  }$ behaves well
in terms of bias and rmse and it is clear that from Table \ref{Tab2} that
$\widehat{\gamma}_{1}$ performs better $\widehat{\gamma}_{1}^{\left(
BMN\right)  }$ both in bias and remse too.%

\begin{table}[tbp] \centering
\begin{tabular}
[c]{ccccccccc}
& \multicolumn{4}{c}{$p=0.7$} & \multicolumn{4}{c}{$p=0.9$}\\\hline
$N$ & \multicolumn{1}{|c}{$n$} & $\upsilon^{\ast}$ & abias & rmse &
\multicolumn{1}{|c}{$n$} & $\upsilon^{\ast}$ & abias & rmse\\\hline\hline
\multicolumn{9}{c}{$\gamma_{1}=0.6$}\\\hline\hline
\multicolumn{1}{l}{$100$} & \multicolumn{1}{|l}{$70$} &
\multicolumn{1}{l}{$27$} & \multicolumn{1}{l}{$0.009$} &
\multicolumn{1}{l}{$0.047$} & \multicolumn{1}{|l}{$89$} &
\multicolumn{1}{l}{$38$} & \multicolumn{1}{l}{$0.004$} &
\multicolumn{1}{l}{$0.048$}\\
\multicolumn{1}{l}{$200$} & \multicolumn{1}{|c}{$151$} & $70$ & $0.008$ &
\multicolumn{1}{l}{$0.046$} & \multicolumn{1}{|c}{$179$} & $73$ &
\multicolumn{1}{l}{$0.003$} & \multicolumn{1}{l}{$0.046$}\\
\multicolumn{1}{l}{$500$} & \multicolumn{1}{|c}{$349$} & $208$ & $0.005$ &
\multicolumn{1}{l}{$0.043$} & \multicolumn{1}{|c}{$450$} & $243$ &
\multicolumn{1}{l}{$0.002$} & \multicolumn{1}{l}{$0.048$}\\
\multicolumn{1}{l}{$1000$} & \multicolumn{1}{|c}{$697$} & $667$ & $0.001$ &
\multicolumn{1}{l}{$0.027$} & \multicolumn{1}{|c}{$896$} & $641$ &
\multicolumn{1}{l}{$0.001$} & \multicolumn{1}{l}{$0.030$}\\\hline\hline
\multicolumn{9}{c}{$\gamma_{1}=0.8$}\\\hline\hline
\multicolumn{1}{l}{$100$} & \multicolumn{1}{|l}{$70$} &
\multicolumn{1}{l}{$30$} & \multicolumn{1}{l}{$0.011$} &
\multicolumn{1}{l}{$0.050$} & \multicolumn{1}{|l}{$90$} &
\multicolumn{1}{l}{$40$} & \multicolumn{1}{l}{$0.013$} &
\multicolumn{1}{l}{$0.048$}\\
\multicolumn{1}{l}{$200$} & \multicolumn{1}{|c}{$139$} & $67$ & $0.009$ &
\multicolumn{1}{l}{$0.048$} & \multicolumn{1}{|c}{$179$} & $71$ &
\multicolumn{1}{l}{$0.012$} & \multicolumn{1}{l}{$0.047$}\\
\multicolumn{1}{l}{$500$} & \multicolumn{1}{|c}{$350$} & $198$ & $0.008$ &
\multicolumn{1}{l}{$0.043$} & \multicolumn{1}{|c}{$449$} & $232$ &
\multicolumn{1}{l}{$0.006$} & \multicolumn{1}{l}{$0.049$}\\
\multicolumn{1}{l}{$1000$} & \multicolumn{1}{|c}{$730$} & $301$ & $0.002$ &
\multicolumn{1}{l}{$0.037$} & \multicolumn{1}{|c}{$903$} & $378$ &
\multicolumn{1}{l}{$0.002$} & \multicolumn{1}{l}{$0.029$}\\\hline\hline
&  &  &  &  &  &  &  &
\end{tabular}
\caption{Absolute bias and rmse of the second-order parameter estimator
based on 1000 right-truncated samples of Burr's models.}\label{Tab1}%
\end{table}%
%

\begin{table}[tbp] \centering
\begin{tabular}
[c]{ccccccccccccccc}
& \multicolumn{7}{c}{$p=0.7$} & \multicolumn{7}{||c}{$p=0.9$}\\\hline
&  & \multicolumn{3}{c}{$\widehat{\gamma}_{1}$} &
\multicolumn{3}{|c}{$\widehat{\gamma}_{1}^{\left(  BMN\right)  }$} &
\multicolumn{1}{||c}{} & \multicolumn{3}{c}{$\widehat{\gamma}_{1}$} &
\multicolumn{3}{|c}{$\widehat{\gamma}_{1}^{\left(  BMN\right)  }$}\\\hline
$N$ & \multicolumn{1}{|c}{$n$} & $k^{\ast}$ & abias & rmse &
\multicolumn{1}{|c}{$k^{\ast}$} & abias & rmse & \multicolumn{1}{||c}{$n$} &
$k^{\ast}$ & abias & rmse & \multicolumn{1}{|c}{$k^{\ast}$} & abias &
rmse\\\hline\hline
\multicolumn{15}{c}{$\gamma_{1}=0.6$}\\\hline\hline
\multicolumn{1}{l}{${\small 100}$} & \multicolumn{1}{|l}{${\small 70}$} &
\multicolumn{1}{l}{${\small 12}$} & \multicolumn{1}{l}{${\small 0.068}$} &
\multicolumn{1}{l}{${\small 0.263}$} & \multicolumn{1}{|c}{${\small 11}$} &
${\small 0.127}$ & ${\small 0.259}$ & \multicolumn{1}{||l}{${\small 89}$} &
\multicolumn{1}{l}{${\small 16}$} & \multicolumn{1}{l}{${\small 0.013}$} &
\multicolumn{1}{l}{${\small 0.152}$} & \multicolumn{1}{|c}{${\small 15}$} &
${\small 0.118}$ & ${\small 0.217}$\\
\multicolumn{1}{l}{${\small 200}$} & \multicolumn{1}{|c}{${\small 140}$} &
${\small 26}$ & ${\small 0.048}$ & \multicolumn{1}{l}{${\small 0.200}$} &
\multicolumn{1}{|c}{${\small 24}$} & ${\small 0.090}$ & ${\small 0.223}$ &
\multicolumn{1}{||c}{${\small 180}$} & ${\small 34}$ & ${\small 0.006}$ &
\multicolumn{1}{l}{${\small 0.116}$} & \multicolumn{1}{|c}{${\small 31}$} &
${\small 0.089}$ & ${\small 0.176}$\\
\multicolumn{1}{l}{${\small 500}$} & \multicolumn{1}{|c}{${\small 349}$} &
${\small 63}$ & ${\small 0.020}$ & \multicolumn{1}{l}{${\small 0.123}$} &
\multicolumn{1}{|c}{${\small 58}$} & ${\small 0.072}$ & ${\small 0.173}$ &
\multicolumn{1}{||c}{${\small 449}$} & ${\small 83}$ & ${\small 0.002}$ &
\multicolumn{1}{l}{${\small 0.078}$} & \multicolumn{1}{|c}{${\small 78}$} &
${\small 0.056}$ & ${\small 0.129}$\\
\multicolumn{1}{l}{${\small 1000}$} & \multicolumn{1}{|c}{${\small 703}$} &
${\small 115}$ & ${\small 0.007}$ & \multicolumn{1}{l}{${\small 0.097}$} &
\multicolumn{1}{|c}{${\small 112}$} & ${\small 0.011}$ & ${\small 0.121}$ &
\multicolumn{1}{||c}{${\small 898}$} & ${\small 176}$ & ${\small 0.001}$ &
\multicolumn{1}{l}{${\small 0.037}$} & \multicolumn{1}{|c}{${\small 174}$} &
${\small 0.016}$ & ${\small 0.056}$\\\hline\hline
\multicolumn{15}{c}{$\gamma_{1}=0.8$}\\\hline\hline
\multicolumn{1}{l}{${\small 100}$} & \multicolumn{1}{|l}{${\small 70}$} &
\multicolumn{1}{l}{${\small 13}$} & \multicolumn{1}{l}{${\small 0.067}$} &
\multicolumn{1}{l}{${\small 0.311}$} & \multicolumn{1}{|c}{${\small 12}$} &
${\small 0.222}$ & ${\small 0.217}$ & \multicolumn{1}{||l}{${\small 89}$} &
\multicolumn{1}{l}{${\small 16}$} & \multicolumn{1}{l}{${\small 0.063}$} &
\multicolumn{1}{l}{${\small 0.220}$} & \multicolumn{1}{|c}{${\small 15}$} &
${\small 0.196}$ & ${\small 0.315}$\\
\multicolumn{1}{l}{${\small 200}$} & \multicolumn{1}{|c}{${\small 140}$} &
${\small 25}$ & ${\small 0.014}$ & \multicolumn{1}{l}{${\small 0.219}$} &
\multicolumn{1}{|c}{${\small 24}$} & ${\small 0.163}$ & ${\small 0.282}$ &
\multicolumn{1}{||c}{${\small 179}$} & ${\small 33}$ & ${\small 0.033}$ &
\multicolumn{1}{l}{${\small 0.150}$} & \multicolumn{1}{|c}{${\small 31}$} &
${\small 0.131}$ & ${\small 0.220}$\\
\multicolumn{1}{l}{${\small 500}$} & \multicolumn{1}{|c}{${\small 349}$} &
${\small 64}$ & ${\small 0.011}$ & \multicolumn{1}{l}{${\small 0.152}$} &
\multicolumn{1}{|c}{${\small 59}$} & ${\small 0.033}$ & ${\small 0.222}$ &
\multicolumn{1}{||c}{${\small 449}$} & ${\small 85}$ & ${\small 0.021}$ &
\multicolumn{1}{l}{${\small 0.097}$} & \multicolumn{1}{|c}{${\small 79}$} &
${\small 0.088}$ & ${\small 0.156}$\\
\multicolumn{1}{l}{${\small 1000}$} & \multicolumn{1}{|c}{${\small 707}$} &
${\small 145}$ & ${\small 0.007}$ & \multicolumn{1}{l}{${\small 0.054}$} &
\multicolumn{1}{|c}{${\small 125}$} & ${\small 0.017}$ & ${\small 0.133}$ &
\multicolumn{1}{||c}{${\small 897}$} & ${\small 179}$ & ${\small 0.013}$ &
\multicolumn{1}{l}{${\small 0.058}$} & \multicolumn{1}{|c}{${\small 166}$} &
${\small 0.019}$ & ${\small 0.098}$\\\hline\hline
&  &  &  &  &  &  &  &  &  &  &  &  &  &
\end{tabular}
\caption{Absolute biases and rmse's of  the tail index estimators
based on 1000 right-truncated samples of Burr's models. }\label{Tab2}%
\end{table}%

\section{\textbf{Proofs}\label{sec4}}

\subsection{\noindent\textbf{Proof of Theorem }$\ref{Theorem1}$}

\noindent We begin by proving the consistency of $\widehat{\rho}_{1}^{\left(
\alpha\right)  }$ defined in $(\ref{rho1}).$ We let%
\[
\mathbf{L}_{n}\left(  x;\upsilon\right)  :=\frac{\overline{\mathbf{F}}%
_{n}\left(  X_{n-\upsilon:n}x\right)  }{\overline{\mathbf{F}}_{n}\left(
X_{n-\upsilon:n}\right)  }-\frac{\overline{\mathbf{F}}\left(  X_{n-\upsilon
:n}x\right)  }{\overline{\mathbf{F}}\left(  X_{n-\upsilon:n}\right)  },
\]
and we show that for any $\alpha>0$
\begin{equation}
M_{n}^{\left(  \alpha\right)  }\left(  \upsilon\right)  =\gamma_{1}^{\alpha
}\mu_{\alpha}^{\left(  1\right)  }+\int_{1}^{\infty}\mathbf{L}_{n}\left(
x;\upsilon\right)  d\log^{\alpha}x+\left(  1+o_{\mathbf{P}}\left(  1\right)
\right)  \alpha\gamma_{1}^{\alpha-1}\mu_{\alpha}^{\left(  2\right)  }\left(
\rho_{1}\right)  \mathbf{A}_{0}\left(  n/\upsilon\right)  , \label{M-approx}%
\end{equation}
where $\mu_{\alpha}^{\left(  1\right)  }$ and $\mu_{\alpha}^{\left(  2\right)
}\left(  \rho_{1}\right)  $ are as in $(\ref{mu1-2}).$ It is clear that from
formula $\left(  \ref{Mn}\right)  ,$ $M_{n}^{\left(  \alpha\right)  }\left(
\upsilon\right)  $ may be rewritten into $-\int_{1}^{\infty}\log^{\alpha
}xd\overline{\mathbf{F}}_{n}\left(  X_{n-\upsilon:n}x\right)  /\overline
{\mathbf{F}}_{n}\left(  X_{n-\upsilon:n}\right)  ,$ which by an integration by
parts equals $\int_{1}^{\infty}\overline{\mathbf{F}}_{n}\left(  X_{n-\upsilon
:n}x\right)  /\overline{\mathbf{F}}_{n}\left(  X_{n-\upsilon:n}\right)
d\log^{\alpha}x.$ The latter may be decomposed into%
\[
\int_{1}^{\infty}\mathbf{L}_{n}\left(  x;\upsilon\right)  d\log^{\alpha}%
x+\int_{1}^{\infty}\left(  \frac{\overline{\mathbf{F}}_{n}\left(
X_{n-\upsilon:n}x\right)  }{\overline{\mathbf{F}}_{n}\left(  X_{n-\upsilon
:n}\right)  }-x^{-1/\gamma_{1}}\right)  d\log^{\alpha}x+\int_{1}^{\infty
}x^{-1/\gamma_{1}}d\log^{\alpha}x.
\]
It is easy to verify that $\int_{1}^{\infty}x^{-1/\gamma_{1}}d\log^{\alpha}x$
equals $\gamma_{1}^{\alpha}\mu_{\alpha}^{\left(  1\right)  }.$ Since,
$X_{n-\upsilon:n}\rightarrow\infty,$ almost surely, then by making use of the
uniform inequality of the second-order regularly varying functions, to
$\overline{\mathbf{F}},$ given in Proposition 4 of \cite{Hua}, we write: with
probability one, for any $0<\epsilon<1$ and large $N$%
\begin{equation}
\left\vert \frac{\overline{\mathbf{F}}\left(  X_{n-\upsilon:n}x\right)
/\overline{\mathbf{F}}\left(  X_{n-\upsilon:n}\right)  -x^{-1/\gamma_{1}}%
}{\gamma_{1}^{-2}\widetilde{\mathbf{A}}_{\mathbf{F}}\left(  1/\overline
{\mathbf{F}}\left(  X_{n-\upsilon:n}\right)  \right)  }-x^{-1/\gamma_{1}%
}\dfrac{x^{\rho_{1}/\gamma_{1}}-1}{\gamma_{1}/\rho_{1}}\right\vert
\leq\epsilon x^{-1/\gamma_{1}+\epsilon},\text{ for any }x\geq1,
\label{Potter1}%
\end{equation}
where $\widetilde{\mathbf{A}}_{\mathbf{F}}\left(  t\right)  \sim
\mathbf{A}_{\mathbf{F}}\left(  t\right)  ,$ as $t\rightarrow\infty.$ This
implies, almost surely, that
\begin{align*}
&  \int_{1}^{\infty}\left(  \frac{\overline{\mathbf{F}}_{n}\left(
X_{n-\upsilon:n}x\right)  }{\overline{\mathbf{F}}_{n}\left(  X_{n-\upsilon
:n}\right)  }-x^{-1/\gamma_{1}}\right)  d\log^{\alpha}x\\
&  =\widetilde{\mathbf{A}}_{\mathbf{F}}\left(  1/\overline{\mathbf{F}}\left(
X_{n-\upsilon:n}\right)  \right)  \left\{  \int_{1}^{\infty}x^{-1/\gamma_{1}%
}\dfrac{x^{\rho_{1}/\gamma_{1}}-1}{\gamma_{1}\rho_{1}}d\log^{\alpha
}x+o_{\mathbf{P}}\left(  \int_{1}^{\infty}x^{-1/\gamma_{1}+\epsilon}%
d\log^{\alpha}x\right)  \right\}  .
\end{align*}
We check that $\int_{1}^{\infty}x^{-1/\gamma_{1}}\dfrac{x^{\rho_{1}/\gamma
_{1}}-1}{\gamma_{1}\rho_{1}}d\log^{\alpha}x=\alpha\gamma_{1}^{\alpha-1}%
\mu_{\alpha}^{\left(  2\right)  }\left(  \rho_{1}\right)  $ and $\int
_{1}^{\infty}x^{-1/\gamma_{1}+\epsilon}d\log^{\alpha}x$ is finite. From Lemma
7.4 in \cite{BchMN-16a}, $X_{n-\upsilon:n}/\mathbb{U}_{F^{\ast}}\left(
n/\upsilon\right)  \overset{\mathbf{P}}{\rightarrow}1,$ as $N\rightarrow
\infty,$ then by using the regular variation property of $\left\vert
\mathbf{A}_{\mathbf{F}}\left(  1/\overline{\mathbf{F}}\left(  \cdot\right)
\right)  \right\vert $ and the corresponding Potter's inequalities (see, for
instance, Proposition B.1.10 in \cite{deHF06}), we get
\[
\widetilde{\mathbf{A}}_{\mathbf{F}}\left(  1/\overline{\mathbf{F}}\left(
X_{n-\upsilon:n}\right)  \right)  =\left(  1+o_{\mathbf{P}}\left(  1\right)
\right)  \mathbf{A}_{\mathbf{F}}\left(  1/\overline{\mathbf{F}}\left(
\mathbb{U}_{F^{\ast}}\left(  n/\upsilon\right)  \right)  \right)  =\left(
1+o_{\mathbf{P}}\left(  1\right)  \right)  \mathbf{A}_{0}\left(
n/\upsilon\right)  ,
\]
therefore
\[
M_{n}^{\left(  \alpha\right)  }\left(  \upsilon\right)  =\gamma_{1}^{\alpha
}\mu_{\alpha}^{\left(  1\right)  }+\int_{1}^{\infty}\mathbf{L}_{n}\left(
x;\upsilon\right)  d\log^{\alpha}x+\alpha\gamma_{1}^{\alpha-1}\mu_{\alpha
}^{\left(  2\right)  }\left(  \rho_{1}\right)  \mathbf{A}_{0}\left(
n/\upsilon\right)  \left(  1+o_{\mathbf{P}}\left(  1\right)  \right)  .
\]
In the second step, we use the Gaussian approximation of $\mathbf{L}%
_{n}\left(  x\right)  $ recently given by \cite{BchMN-16a} (assertion $\left(
6.26\right)  ),$ saying that: for any $0<\epsilon<1/2-\gamma/\gamma_{2},$
there exists a standard Wiener process $\left\{  \mathbf{W}\left(  s\right)
;\text{ }s\geq0\right\}  ,$\ defined on the probability space $\left(
\Omega,\mathcal{A},\mathbf{P}\right)  $ such that%
\begin{equation}
\sup_{x\geq1}x^{\left(  1/2-\epsilon\right)  /\gamma-1/\gamma_{2}}\left\vert
\sqrt{\upsilon}\mathbf{L}_{n}\left(  x;\upsilon\right)  -\mathcal{L}\left(
x;\mathbf{W}\right)  \right\vert \overset{\mathbf{P}}{\rightarrow}0,\text{ as
}N\rightarrow\infty, \label{approx}%
\end{equation}
where$\mathbb{\ }\left\{  \mathcal{L}\left(  x;\mathbf{W}\right)  ;\text{
}x>0\right\}  $\textbf{\ }is a Gaussian process defined by%
\begin{align*}
&  \frac{\gamma}{\gamma_{1}}x^{-1/\gamma_{1}}\left\{  x^{1/\gamma}%
\mathbf{W}\left(  x^{-1/\gamma}\right)  -\mathbf{W}\left(  1\right)  \right\}
\\
&  \ \ \ \ \ \ +\frac{\gamma}{\gamma_{1}+\gamma_{2}}x^{-1/\gamma_{1}}\int
_{0}^{1}s^{-\gamma/\gamma_{2}-1}\left\{  x^{1/\gamma}\mathbf{W}\left(
x^{-1/\gamma}s\right)  -\mathbf{W}\left(  s\right)  \right\}  ds.
\end{align*}
Let us decompose $\sqrt{\upsilon}\int_{1}^{\infty}\mathbf{L}_{n}\left(
x;\upsilon\right)  d\log^{\alpha}x$ into
\[
\int_{1}^{\infty}\mathcal{L}\left(  x;\mathbf{W}\right)  d\log^{\alpha}%
x+\int_{1}^{\infty}\left\{  \sqrt{\upsilon}\mathbf{L}_{n}\left(
x;\upsilon\right)  -\mathcal{L}\left(  x;\mathbf{W}\right)  \right\}
d\log^{\alpha}x.
\]
By using approximation $(\ref{approx}),$ we obtain $\int_{1}^{\infty}\left\{
\sqrt{\upsilon}\mathbf{L}_{n}\left(  x;\upsilon\right)  -\mathcal{L}\left(
x;\mathbf{W}\right)  \right\}  d\log^{\alpha}x=o_{\mathbf{P}}\left(  1\right)
.$ We showed in Lemma \ref{lemma2} that $\int_{1}^{\infty}\mathcal{L}\left(
x;\mathbf{W}\right)  d\log^{\alpha}x=O_{\mathbf{P}}\left(  1\right)  ,$
therefore $\int_{1}^{\infty}\mathbf{L}_{n}\left(  x;\upsilon\right)
d\log^{\alpha}x=O_{\mathbf{P}}\left(  \upsilon^{-1/2}\right)  ,$ it follows
that%
\begin{align}
M_{n}^{\left(  \alpha\right)  }\left(  \upsilon\right)   &  =\gamma
_{1}^{\alpha}\mu_{\alpha}^{\left(  1\right)  }+\upsilon^{-1/2}\int_{1}%
^{\infty}\mathcal{L}\left(  x;\mathbf{W}\right)  d\log^{\alpha}%
x\label{aproxi-M-alpha}\\
&  +\alpha\gamma_{1}^{\alpha-1}\mu_{\alpha}^{\left(  2\right)  }\left(
\rho_{1}\right)  \mathbf{A}_{0}\left(  n/\upsilon\right)  \left(
1+o_{\mathbf{p}}\left(  1\right)  \right)  +o_{\mathbf{P}}\left(
\upsilon^{-1/2}\right)  .\nonumber
\end{align}
Once again, by using the fact that $\int_{1}^{\infty}\mathcal{L}\left(
x;\mathbf{W}\right)  d\log^{\alpha}x=O_{\mathbf{P}}\left(  1\right)  ,$ we get%
\[
M_{n}^{\left(  \alpha\right)  }\left(  \upsilon\right)  =\gamma_{1}^{\alpha
}\mu_{\alpha}^{\left(  1\right)  }+\alpha\gamma_{1}^{\alpha-1}\mu_{\alpha
}^{\left(  2\right)  }\left(  \rho_{1}\right)  \mathbf{A}_{0}\left(
n/\upsilon\right)  \left(  1+o_{\mathbf{P}}\left(  1\right)  \right)
+o_{\mathbf{P}}\left(  \upsilon^{-1/2}\right)  .
\]
It particular, for $\alpha=1,$ we have $\mu_{1}^{\left(  1\right)  }=1,$ this
means that
\[
M_{n}^{\left(  1\right)  }\left(  \upsilon\right)  =\gamma_{1}+\mu
_{1}^{\left(  2\right)  }\left(  \rho_{1}\right)  \mathbf{A}_{0}\left(
n/\upsilon\right)  \left(  1+o_{\mathbf{p}}\left(  1\right)  \right)
+o_{\mathbf{P}}\left(  \upsilon^{-1/2}\right)  ,
\]
which implies that%
\begin{equation}
\left(  M_{n}^{\left(  1\right)  }\left(  \upsilon\right)  \right)
^{2}=\gamma_{1}^{2}+2\gamma_{1}\mu_{1}^{\left(  2\right)  }\left(  \rho
_{1}\right)  \mathbf{A}_{0}\left(  n/\upsilon\right)  \left(  1+o_{\mathbf{P}%
}\left(  1\right)  \right)  +o_{\mathbf{P}}\left(  \upsilon^{-1/2}\right)  .
\label{consi2-Malph}%
\end{equation}
Likewise, for $\alpha=2,$ we have $\mu_{2}^{\left(  1\right)  }=2,$ then
\begin{equation}
M_{n}^{\left(  2\right)  }\left(  \upsilon\right)  =2\gamma_{1}^{2}%
+2\gamma_{1}\mu_{2}^{\left(  2\right)  }\left(  \rho_{1}\right)
\mathbf{A}_{0}\left(  n/\upsilon\right)  \left(  1+o_{\mathbf{P}}\left(
1\right)  \right)  +o_{\mathbf{P}}\left(  \upsilon^{-1/2}\right)  .
\label{Mn2}%
\end{equation}
Similar to the definition of $M_{n}^{\left(  \alpha\right)  }\left(
\upsilon\right)  ,$ let $Q_{n}^{\left(  \alpha\right)  }\left(  \upsilon
\right)  $ be $Q^{\alpha}\left(  t;\mathbf{F}\right)  $ with $\mathbb{U}%
_{F^{\ast}}\left(  t\right)  $ and$\ \mathbf{F}$ respectively replaced by by
$X_{n-\upsilon:n}$ and $\mathbf{F}_{n}.$ From $\left(  \ref{Q}\right)  ,$ we
may write%
\[
Q_{n}^{\left(  \alpha\right)  }\left(  \upsilon\right)  =\frac{M_{n}^{\left(
\alpha\right)  }\left(  \upsilon\right)  -\Gamma\left(  \alpha+1\right)
\left(  M_{n}^{\left(  1\right)  }\left(  \upsilon\right)  \right)  ^{\alpha}%
}{M_{n}^{\left(  2\right)  }\left(  \upsilon\right)  -2\left(  M_{n}^{\left(
1\right)  }\left(  \upsilon\right)  \right)  ^{2}}.
\]
Then, by using the approximations above, we end up with%
\[
Q_{n}^{\left(  \alpha\right)  }\left(  \upsilon\right)  =\left(
1+o_{\mathbf{P}}\left(  1\right)  \right)  \frac{\alpha\gamma_{1}^{\alpha
-1}\left(  \mu_{\alpha}^{\left(  2\right)  }\left(  \rho_{1}\right)
-\mu_{\alpha}^{\left(  1\right)  }\mu_{1}^{\left(  2\right)  }\left(  \rho
_{1}\right)  \right)  }{2\gamma_{1}\left(  \mu_{2}^{\left(  2\right)  }\left(
\rho_{1}\right)  -\mu_{2}^{\left(  1\right)  }\mu_{1}^{\left(  2\right)
}\left(  \rho_{1}\right)  \right)  }.
\]
By replacing $\mu_{\alpha}^{\left(  1\right)  },$ $\mu_{1}^{\left(  1\right)
},$ $\mu_{\alpha}^{\left(  2\right)  }\left(  \rho_{1}\right)  $ and $\mu
_{1}^{\left(  2\right)  }\left(  \rho_{1}\right)  $ by their corresponding
expressions, given in $(\ref{mu1-2}),$ with the fact that $\alpha\Gamma\left(
\alpha\right)  =\Gamma\left(  \alpha+1\right)  ,$ we show that the previous
quotient equals $q_{\alpha}\left(  \rho_{1}\right)  $ given in $(\ref{q}).$
This implies that $Q_{n}^{\left(  \alpha\right)  }\left(  \upsilon\right)
\overset{\mathbf{P}}{\rightarrow}q_{\alpha}\left(  \rho_{1}\right)  $ and
therefore $S_{n}^{\left(  \alpha\right)  }\left(  \upsilon\right)
\overset{\mathbf{P}}{\rightarrow}s_{\alpha}\left(  \rho_{1}\right)  ,$ as
$N\rightarrow\infty,$ as well. By using the mean value theorem, we infer that
$\widehat{\rho}_{1}^{\left(  \alpha\right)  }=s_{\alpha}^{\leftarrow}\left(
S_{n}^{\left(  \alpha\right)  }\left(  \upsilon\right)  \right)
\overset{\mathbf{P}}{\rightarrow}\rho_{1},$ as sought. Let us now focus on the
asymptotic representation of $\widehat{\rho}_{1}^{\left(  \alpha\right)  }.$
We begin by denoting $\widetilde{M}_{n}^{\left(  \alpha\right)  }\left(
\upsilon\right)  ,$ $\widetilde{S}_{n}^{\left(  \alpha\right)  }\left(
\upsilon\right)  $ and $\widetilde{Q}_{n}^{\left(  \alpha\right)  }\left(
\upsilon\right)  $ the respective values of $M^{\left(  \alpha\right)
}\left(  t;\mathbf{F}\right)  ,$ $S^{\left(  \alpha\right)  }\left(
t;\mathbf{F}\right)  $ and $Q^{\left(  \alpha\right)  }\left(  t;\mathbf{F}%
\right)  $ when replacing $\mathbb{U}_{F^{\ast}}\left(  t\right)  $ by
$X_{n-\upsilon:n}.$ It is clear that the quantity $S_{n}^{\left(
\alpha\right)  }\left(  \upsilon\right)  -s_{\alpha}\left(  \rho_{1}\right)  $
may be decomposed into the sum of%
\[
T_{n1}:=-\delta\left(  \alpha\right)  \frac{\left(  Q_{n}^{\left(
\alpha+1\right)  }\left(  \upsilon\right)  \right)  ^{2}-\left(  \widetilde
{Q}_{n}^{\left(  \alpha+1\right)  }\left(  \upsilon\right)  \right)  ^{2}%
}{\left(  Q_{n}^{\left(  \alpha+1\right)  }\left(  \upsilon\right)
\widetilde{Q}_{n}^{\left(  \alpha+1\right)  }\left(  \upsilon\right)  \right)
^{2}}Q_{n}^{\left(  2\alpha\right)  }\left(  \upsilon;\mathbf{F}_{n}\right)
,
\]%
\[
T_{n2}:=\delta\left(  \alpha\right)  \frac{Q_{n}^{\left(  2\alpha\right)
}\left(  \upsilon\right)  -\widetilde{Q}_{n}^{\left(  2\alpha\right)  }\left(
\upsilon\right)  }{\left(  \widetilde{Q}_{n}^{\left(  \alpha+1\right)
}\left(  \upsilon\right)  \right)  ^{2}}\text{ and }T_{n3}:=\widetilde{S}%
_{n}^{\left(  \alpha\right)  }\left(  \upsilon\right)  -s_{\alpha}\left(
\rho_{1}\right)  .
\]
Since $Q_{n}^{\left(  \alpha\right)  }\left(  \upsilon\right)  \overset
{\mathbf{P}}{\rightarrow}q_{\alpha}\left(  \rho_{1}\right)  ,$ then by using
the mean value theorem, we get%
\[
T_{n1}=-\left(  1+o_{\mathbf{p}}\left(  1\right)  \right)  2\delta\left(
\alpha\right)  q_{2\alpha}q_{\alpha+1}^{-3}\left(  Q_{n}^{\left(
\alpha+1\right)  }\left(  \upsilon\right)  -\widetilde{Q}_{n}^{\left(
\alpha+1\right)  }\left(  \upsilon\right)  \right)  .
\]
Making use of the third-order condition $(\ref{Third}),$ with analogy of the
weak approximation given in \cite{GHP2003} (page 411), we write
\begin{align}
M_{n}^{\left(  \alpha\right)  }\left(  \upsilon\right)   &  =\gamma
_{1}^{\alpha}\mu_{\alpha}^{\left(  1\right)  }+\upsilon^{-1/2}\int_{1}%
^{\infty}\mathcal{L}\left(  x;\mathbf{W}\right)  d\log^{\alpha}x+\alpha
\gamma_{1}^{\alpha-1}\mu_{\alpha}^{\left(  2\right)  }\left(  \rho_{1}\right)
\mathbf{A}_{0}\left(  n/\upsilon\right) \label{A1}\\
&  +\alpha\gamma_{1}^{\alpha-1}\mu_{\alpha}^{\left(  4\right)  }\left(
\rho_{1},\beta_{1}\right)  \mathbf{A}_{0}\left(  n/\upsilon\right)
\mathbf{B}_{0}\left(  n/\upsilon\right)  \left(  1+o_{\mathbf{p}}\left(
1\right)  \right)  +o_{\mathbf{P}}\left(  \upsilon^{-1/2}\right)  .\nonumber
\end{align}
Since $\int_{1}^{\infty}\mathcal{L}\left(  x;\mathbf{W}\right)  d\log^{\alpha
}x=O_{\mathbf{P}}\left(  1\right)  ,$ then%
\begin{align}
M_{n}^{\left(  \alpha\right)  }\left(  \upsilon\right)   &  =\gamma
_{1}^{\alpha}\mu_{\alpha}^{\left(  1\right)  }+\alpha\gamma_{1}^{\alpha-1}%
\mu_{\alpha}^{\left(  2\right)  }\left(  \rho_{1}\right)  \mathbf{A}%
_{0}\left(  n/\upsilon\right) \label{A2}\\
&  \ +\alpha\gamma_{1}^{\alpha-1}\mu_{\alpha}^{\left(  4\right)  }\left(
\rho_{1},\beta_{1}\right)  \mathbf{A}_{0}\left(  n/\upsilon\right)
\mathbf{B}_{0}\left(  n/\upsilon\right)  \left(  1+o_{\mathbf{P}}\left(
1\right)  \right)  +o_{\mathbf{P}}\left(  \upsilon^{-1/2}\right)  .\nonumber
\end{align}
Let us write%
\begin{align*}
&  Q_{n}^{\left(  \alpha\right)  }\left(  \upsilon\right)  -\widetilde{Q}%
_{n}^{\left(  \alpha\right)  }\left(  \upsilon\right) \\
&  =\frac{M_{n}^{\left(  \alpha\right)  }\left(  \upsilon\right)
-\Gamma\left(  \alpha+1\right)  \left(  M_{n}^{\left(  1\right)  }\left(
\upsilon\right)  \right)  ^{2}}{M_{n}^{\left(  2\right)  }\left(
\upsilon\right)  -2\left(  M_{n}^{\left(  1\right)  }\left(  \upsilon\right)
\right)  ^{2}}-\frac{\widetilde{M}_{n}^{\left(  \alpha\right)  }\left(
\upsilon\right)  -\Gamma\left(  \alpha+1\right)  \left(  \widetilde{M}%
_{n}^{\left(  1\right)  }\left(  \upsilon\right)  \right)  ^{2}}{\widetilde
{M}_{n}^{\left(  2\right)  }\left(  \upsilon\right)  -2\left(  \widetilde
{M}_{n}^{\left(  1\right)  }\left(  \upsilon\right)  \right)  ^{2}}.
\end{align*}
By reducing to the common denominator and by using the weak approximations
$(\ref{A1})$ and $(\ref{A2})$ with the fact that $\mathbf{A}_{0}\left(
n/\upsilon\right)  \overset{\mathbf{P}}{\rightarrow}0,$ $\sqrt{\upsilon
}\mathbf{A}_{0}^{2}\left(  n/\upsilon\right)  $ and $\sqrt{\upsilon}%
\mathbf{A}_{0}\left(  n/\upsilon\right)  \mathbf{B}_{0}\left(  n/\upsilon
\right)  $ are stochastically bounded, we get%
\begin{align*}
&  \sqrt{\upsilon}\mathbf{A}_{0}\left(  n/\upsilon\right)  \left(
Q_{n}^{\left(  \alpha\right)  }\left(  \upsilon\right)  -\widetilde{Q}%
_{n}^{\left(  \alpha\right)  }\left(  \upsilon\right)  \right) \\
&  =\int_{1}^{\infty}\mathcal{L}\left(  x;\mathbf{W}\right)  dg_{1}%
(x;\alpha)+\theta_{1}\left(  \alpha\right)  \sqrt{\upsilon}\mathbf{A}%
_{0}\left(  n/\upsilon\right)  \mathbf{B}_{0}\left(  n/\upsilon\right)
+o_{\mathbf{P}}\left(  1\right)  ,
\end{align*}
where
\[
g_{1}(x;\alpha):=\frac{\gamma_{1}^{\alpha-1}}{2m_{2}}\left\{  \gamma
_{1}^{-\alpha}\log^{\alpha}x-\frac{\alpha\Gamma\left(  \alpha\right)  }%
{2}r_{\alpha}\gamma_{1}^{-2}\log^{2}x-\left(  \alpha\mu_{\alpha}^{\left(
1\right)  }-2\alpha\Gamma\left(  \alpha\right)  r_{\alpha}\right)  \gamma
_{1}^{-1}\log x\right\}  ,
\]
and $\theta_{1}\left(  \alpha\right)  :=\alpha\gamma_{1}^{\alpha-2}\left\{
d_{\alpha}-\Gamma\left(  \alpha\right)  r_{\alpha}d_{2}\right\}  /\left(
2m_{2}\right)  $ with $d_{\alpha},r_{\alpha}$ and $m_{2}$ being those defined
in the beginning of Section \ref{sec2}. It follows that%
\begin{align*}
&  \sqrt{\upsilon}\mathbf{A}_{0}\left(  n/\upsilon\right)  T_{n1}\\
&  =-2\delta\left(  \alpha\right)  q_{2\alpha}q_{\alpha+1}^{-3}\left\{
\int_{1}^{\infty}\mathcal{L}\left(  x;\mathbf{W}\right)  dg_{1}(x;\alpha
+1)+\theta_{1}\left(  \alpha+1\right)  \sqrt{\upsilon}\mathbf{A}_{0}\left(
n/\upsilon\right)  \mathbf{B}_{0}\left(  n/\upsilon\right)  +o_{\mathbf{P}%
}\left(  1\right)  \right\}  .
\end{align*}
Likewise, by similar arguments, we also get%
\begin{align*}
&  \sqrt{\upsilon}\mathbf{A}_{0}\left(  n/\upsilon\right)  T_{n2}\\
&  =\delta\left(  \alpha\right)  q_{\alpha+1}^{-2}\left\{  \int_{1}^{\infty
}\mathcal{L}\left(  x;\mathbf{W}\right)  dg_{1}(x;2\alpha)+\theta_{1}\left(
2\alpha\right)  \sqrt{\upsilon}\mathbf{A}_{0}\left(  n/\upsilon\right)
\mathbf{B}_{0}\left(  n/\upsilon\right)  +o_{\mathbf{P}}\left(  1\right)
\right\}  .
\end{align*}
Therefore%
\[
\sqrt{\upsilon}\mathbf{A}_{0}\left(  n/\upsilon\right)  \left(  T_{n1}%
+T_{n2}\right)  =\int_{1}^{\infty}\mathcal{L}\left(  x;\mathbf{W}\right)
dg(x;\alpha)+K\left(  \alpha\right)  \sqrt{\upsilon}\mathbf{A}_{0}\left(
n/\upsilon\right)  \mathbf{B}_{0}\left(  n/\upsilon\right)  +o_{\mathbf{P}%
}\left(  1\right)  ,
\]
where $K\left(  \alpha\right)  :=\delta\left(  \alpha\right)  \left(
q_{\alpha+1}^{-2}\theta_{1}\left(  2\alpha\right)  -2q_{2\alpha}q_{\alpha
+1}^{-3}\theta_{1}\left(  \alpha+1\right)  \right)  $ and%
\[
g(x;\alpha):=\delta\left(  \alpha\right)  \left(  q_{\alpha+1}^{-2}%
g_{1}\left(  x;2\alpha\right)  -2q_{2\alpha}q_{\alpha+1}^{-3}g_{1}\left(
x;\alpha+1\right)  \right)  .
\]
Once again by using the third-order condition $(\ref{Third})$ with the fact
that $\mathbf{A}_{0}\left(  n/\upsilon\right)  \overset{\mathbf{P}%
}{\rightarrow}0$ and $\sqrt{\upsilon}\mathbf{A}_{0}\left(  n/\upsilon\right)
\mathbf{B}_{0}\left(  n/\upsilon\right)  =O_{\mathbf{P}}\left(  1\right)  ,$
we show that $\sqrt{\upsilon}\mathbf{A}_{0}\left(  n/\upsilon\right)
T_{n3}=\eta_{1}\sqrt{\upsilon}\mathbf{A}_{0}^{2}\left(  n/\upsilon\right)
+o_{\mathbf{P}}\left(  1\right)  .$ It is easy to check that $K\left(
\alpha\right)  \equiv\eta_{2},$ hence we have%
\begin{align*}
&  \sqrt{\upsilon}\mathbf{A}_{0}\left(  n/\upsilon\right)  \left(
S_{n}^{\left(  \alpha\right)  }\left(  \upsilon\right)  -s_{\alpha}\left(
\rho_{1}\right)  \right) \\
&  =\int_{1}^{\infty}\mathcal{L}\left(  x;\mathbf{W}\right)  dg(x;\alpha
)+\eta_{1}\sqrt{\upsilon}\mathbf{A}_{0}^{2}\left(  n/\upsilon\right)
+\eta_{2}\sqrt{\upsilon}\mathbf{A}_{0}\left(  n/\upsilon\right)
\mathbf{B}_{0}\left(  n/\upsilon\right)  +o_{\mathbf{P}}\left(  1\right)  ,
\end{align*}
where $\eta_{1}$ and $\eta_{2}$ are those defined in the beginning of Section
\ref{sec2}. Recall that $S_{n}^{\left(  \alpha\right)  }\left(  \upsilon
\right)  =s_{\alpha}\left(  \widehat{\rho}_{1}^{\left(  \alpha\right)
}\right)  ,$ then in view of the mean value theorem and the consistency of
$\widehat{\rho}_{1}^{\left(  \alpha\right)  },$ we end up with
\begin{align*}
&  s_{\alpha}^{\prime}\left(  \rho_{1}\right)  \sqrt{\upsilon}\mathbf{A}%
_{0}\left(  n/\upsilon\right)  \left(  \widehat{\rho}_{1}^{\left(
\alpha\right)  }-\rho_{1}\right) \\
&  =\int_{1}^{\infty}\mathcal{L}\left(  x;\mathbf{W}\right)  dg(x;\alpha
)+\eta_{1}\sqrt{\upsilon}\mathbf{A}_{0}^{2}\left(  n/\upsilon\right)
+\eta_{2}\sqrt{\upsilon}\mathbf{A}_{0}\left(  n/\upsilon\right)
\mathbf{B}_{0}\left(  n/\upsilon\right)  +o_{\mathbf{P}}\left(  1\right)  .
\end{align*}
Finally, integrating by parts with elementary calculations complete the proof
of the second part of the theorem, namely the Gaussian approximation of
$\widehat{\rho}_{1}^{\left(  \alpha\right)  }.$ For the third assertion, it
suffices to calculate
\[
\mathbf{E}\left[  \int_{0}^{1}s^{-\gamma/\gamma_{2}-1}\Delta_{\alpha
}(s)\mathbf{W}\left(  s\right)  ds-\xi\mathbf{W}\left(  1\right)  \right]
^{2}%
\]
to get the asymptotic variance $\sigma_{\alpha}^{2},$ therefore we omit details.

\subsection{\textbf{Proof of Theorem }$\ref{Theorem2}$}

Let us write%
\[
\sqrt{k}\left(  \widehat{\gamma}_{1}-\gamma_{1}\right)  =\sqrt{k}\left(
M_{n}^{\left(  1\right)  }\left(  k\right)  -\gamma_{1}\right)  +\frac
{\widehat{\rho}_{1}^{\left(  \ast\right)  }-1}{2\widehat{\rho}_{1}^{\left(
\ast\right)  }M_{n}^{\left(  1\right)  }\left(  k\right)  }\sqrt{k}\left(
M_{n}^{\left(  2\right)  }\left(  k\right)  -2\left(  M_{n}^{\left(  1\right)
}\left(  k\right)  \right)  ^{2}\right)  .
\]
From, Theorem 3.1 in \cite{BchMN-16a} and Theorem \ref{Theorem1} above
both\ $M_{n}^{\left(  1\right)  }\left(  k\right)  =\widehat{\gamma}%
_{1}^{\left(  BMN\right)  }$ and $\widehat{\rho}_{1}^{\left(  \ast\right)  }$
are consistent for $\gamma_{1}$ and $\rho_{1}$ respectively. It follows that
\begin{align*}
&  \sqrt{k}\left(  \widehat{\gamma}_{1}-\gamma_{1}\right) \\
&  =\sqrt{k}\left(  M_{n}^{\left(  1\right)  }\left(  k\right)  -\gamma
_{1}\right)  +\frac{\rho_{1}-1}{2\gamma_{1}\rho_{1}}\sqrt{k}\left(
M_{n}^{\left(  2\right)  }\left(  k\right)  -2\left(  M_{n}^{\left(  1\right)
}\left(  k\right)  \right)  ^{2}\right)  \left(  1+o_{\mathbf{P}}\left(
1\right)  \right)  .
\end{align*}
By applying the weak approximation $\left(  \ref{aproxi-M-alpha}\right)  ,$
for $\alpha=1,$ we get%
\begin{equation}
\sqrt{k}\left(  M_{n}^{\left(  1\right)  }\left(  k\right)  -\gamma
_{1}\right)  =\int_{1}^{\infty}\mathcal{L}\left(  x;\mathbf{W}\right)  d\log
x+\frac{\sqrt{k}\mathbf{A}_{0}\left(  n/k\right)  }{1-\rho_{1}}+o_{\mathbf{P}%
}\left(  1\right)  . \label{aa}%
\end{equation}
Using the mean value theorem and the consistency of $M_{n}^{\left(  1\right)
}\left(  k\right)  $ yield%
\begin{align*}
&  \sqrt{k}\left(  \left(  M_{n}^{\left(  1\right)  }\left(  k\right)
\right)  ^{2}-\gamma_{1}^{2}\right) \\
&  =2\gamma_{1}\left\{  \int_{1}^{\infty}\mathcal{L}\left(  x;\mathbf{W}%
\right)  d\log x+\frac{\sqrt{k}\mathbf{A}_{0}\left(  n/k\right)  }{1-\rho_{1}%
}+o_{\mathbf{P}}\left(  1\right)  \right\}  \left(  1+o_{\mathbf{P}}\left(
1\right)  \right)  .
\end{align*}
From Lemma $\ref{lemma2}$ and the assumption $\sqrt{k}\mathbf{A}_{0}\left(
n/k\right)  =O_{\mathbf{P}}\left(  1\right)  $ as $N\rightarrow\infty$ we have%
\[
\sqrt{k}\left(  \left(  M_{n}^{\left(  1\right)  }\left(  k\right)  \right)
^{2}-\gamma_{1}^{2}\right)  =\int_{1}^{\infty}\mathcal{L}\left(
x;\mathbf{W}\right)  d\left(  2\gamma_{1}\log x\right)  +\frac{2\gamma_{1}%
}{1-\rho_{1}}\sqrt{k}\mathbf{A}_{0}\left(  n/k\right)  +o_{\mathbf{P}}\left(
1\right)  .
\]
Once again, by applying the weak approximation $(\ref{aproxi-M-alpha}$ $,$ for
$\alpha=2,$ we write%
\[
\sqrt{k}\left(  M_{n}^{\left(  2\right)  }\left(  k\right)  -2\gamma_{1}%
^{2}\right)  =\int_{1}^{\infty}\mathcal{L}\left(  x;\mathbf{W}\right)
d\log^{2}x+2\gamma_{1}\mu_{2}^{\left(  2\right)  }\left(  \rho_{1}\right)
\sqrt{k}\mathbf{A}_{0}\left(  n/k\right)  +o_{\mathbf{P}}\left(  1\right)  ,
\]
where $\mu_{2}^{\left(  2\right)  }\left(  \rho_{1}\right)  =\left(  1-\left(
1-\rho_{1}\right)  ^{2}\right)  /\left(  \rho_{1}\left(  1-\rho_{1}\right)
^{2}\right)  .$ It follows that%
\begin{align}
&  \sqrt{k}\left(  M_{n}^{\left(  2\right)  }\left(  k\right)  -2\left(
M_{n}^{\left(  1\right)  }\left(  k\right)  \right)  ^{2}\right) \label{bb}\\
&  =\int_{1}^{\infty}\mathcal{L}\left(  x;\mathbf{W}\right)  d\left(  \log
^{2}x-4\gamma_{1}\log x\right)  +\frac{2\gamma_{1}\rho_{1}}{\left(  1-\rho
_{1}\right)  ^{2}}\sqrt{k}\mathbf{A}_{0}\left(  n/k\right)  +o_{\mathbf{P}%
}\left(  1\right)  .\nonumber
\end{align}
By combining approximations $(\ref{aa})$ and $(\ref{bb}),$ we obtain
\[
\sqrt{k}\left(  \widehat{\gamma}_{1}-\gamma_{1}\right)  =\int_{1}^{\infty
}\mathcal{L}\left(  x;\mathbf{W}\right)  d\Psi\left(  x\right)  +o_{\mathbf{P}%
}\left(  \sqrt{k}\mathbf{A}_{0}\left(  n/k\right)  \right)  ,\text{ as
}N\rightarrow\infty,
\]
where $\Psi\left(  x\right)  :=\tau_{6}\log x+\tau_{5}\log^{2}x.$ Finally,
making an integration by parts and a change of variables, with elementary
calculations, achieve the proof of the first assertion of the theorem. The
second part is straightforward.

\section{\textbf{Appendix\label{Appendix}}}

\begin{lemma}
\textbf{\label{lemma1}}Assume that the second-order regular variation
condition $(\ref{second-order})$ holds, then for any $\alpha>0$%
\[%
\begin{tabular}
[c]{l}%
$\left(  i\right)  $ $\lim_{t\rightarrow\infty}\dfrac{M^{\left(
\alpha\right)  }\left(  t;\mathbf{F}\right)  -\mu_{\alpha}^{\left(  1\right)
}\left(  M^{\left(  1\right)  }\left(  t;\mathbf{F}\right)  \right)  ^{\alpha
}}{\left(  M^{\left(  1\right)  }\left(  t;\mathbf{F}\right)  \right)
^{\alpha-1}\mathbf{A}_{0}\left(  t\right)  }=\alpha\left(  \mu_{\alpha
}^{\left(  2\right)  }\left(  \rho_{1}\right)  -\mu_{\alpha}^{\left(
1\right)  }\mu_{1}^{\left(  2\right)  }\left(  \rho_{1}\right)  \right)
.\medskip$\\
$\left(  ii\right)  $ $\lim_{t\rightarrow\infty}Q^{\left(  \alpha\right)
}\left(  t;\mathbf{F}\right)  =q_{\alpha}\left(  \rho_{1}\right)  .\medskip$\\
$\left(  iii\right)  \text{ }\lim_{t\rightarrow\infty}S^{\left(
\alpha\right)  }\left(  t;\mathbf{F}\right)  =s_{\alpha}\left(  \rho
_{1}\right)  .$%
\end{tabular}
\]

\end{lemma}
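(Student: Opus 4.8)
The plan is to reduce all three assertions to a single second-order expansion of the moment functional $M^{(\alpha)}(t;\mathbf{F})$ and then read off (ii) and (iii) as purely algebraic consequences of (i). First I would put $M^{(\alpha)}$ into a form suited to the regular variation machinery: writing $z=\mathbb{U}_{F^{\ast}}(t)$ and integrating (\ref{Mt}) by parts (the boundary terms vanish because $\log^{\alpha}(x/z)$ kills the lower endpoint, while $\overline{\mathbf{F}}$, being regularly varying with index $-1/\gamma_{1}$, decays faster than any power of $\log$), then substituting $x=zu$, gives
\[
M^{(\alpha)}(t;\mathbf{F})=\int_{1}^{\infty}\frac{\overline{\mathbf{F}}(zu)}{\overline{\mathbf{F}}(z)}\,d\log^{\alpha}u.
\]
The key analytic input is a uniform second-order (Potter-type) bound for $\overline{\mathbf{F}}$. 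Since (\ref{second-order}) is stated on the quantile scale, I would first invert it to the tail scale, so that $\overline{\mathbf{F}}$ is second-order regularly varying with first index $-1/\gamma_{1}$, second-order parameter $\rho_{1}/\gamma_{1}$, and auxiliary function $\widetilde{\mathbf{A}}_{\mathbf{F}}(1/\overline{\mathbf{F}}(\cdot))\sim\mathbf{A}_{0}(t)$ — precisely the inequality (\ref{Potter1}) borrowed from Proposition 4 of \cite{Hua}.

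Substituting that bound under the integral and integrating term by term (the remainder integral $\int_{1}^{\infty}u^{-1/\gamma_{1}+\epsilon}\,d\log^{\alpha}u$ converges for $\epsilon<1/\gamma_{1}$, which legitimises the interchange) yields the basic relation
\[
M^{(\alpha)}(t;\mathbf{F})=\gamma_{1}^{\alpha}\mu_{\alpha}^{(1)}+\alpha\gamma_{1}^{\alpha-1}\mu_{\alpha}^{(2)}(\rho_{1})\,\mathbf{A}_{0}(t)\,(1+o(1)),
\]
where $\int_{1}^{\infty}u^{-1/\gamma_{1}}\,d\log^{\alpha}u=\gamma_{1}^{\alpha}\mu_{\alpha}^{(1)}$ and $\int_{1}^{\infty}u^{-1/\gamma_{1}}\tfrac{u^{\rho_{1}/\gamma_{1}}-1}{\gamma_{1}\rho_{1}}\,d\log^{\alpha}u=\alpha\gamma_{1}^{\alpha-1}\mu_{\alpha}^{(2)}(\rho_{1})$ are elementary Gamma integrals; the case $\alpha=1$ recovers $M^{(1)}(t;\mathbf{F})=\gamma_{1}+\mu_{1}^{(2)}(\rho_{1})\mathbf{A}_{0}(t)(1+o(1))$ since $\mu_{1}^{(1)}=1$. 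For (i) I would insert this into the numerator $M^{(\alpha)}-\mu_{\alpha}^{(1)}(M^{(1)})^{\alpha}$ and expand $(M^{(1)})^{\alpha}=\gamma_{1}^{\alpha}+\alpha\gamma_{1}^{\alpha-1}\mu_{1}^{(2)}(\rho_{1})\mathbf{A}_{0}(t)(1+o(1))$ by a first-order Taylor expansion about $\gamma_{1}$; the leading constants $\gamma_{1}^{\alpha}\mu_{\alpha}^{(1)}$ cancel, leaving $\alpha\gamma_{1}^{\alpha-1}(\mu_{\alpha}^{(2)}-\mu_{\alpha}^{(1)}\mu_{1}^{(2)})\mathbf{A}_{0}(t)(1+o(1))$. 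Dividing by $(M^{(1)})^{\alpha-1}\mathbf{A}_{0}(t)$ and using $M^{(1)}\to\gamma_{1}$ gives the stated limit $\alpha(\mu_{\alpha}^{(2)}(\rho_{1})-\mu_{\alpha}^{(1)}\mu_{1}^{(2)}(\rho_{1}))$.

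Assertions (ii) and (iii) then require no new analysis. The numerator of $Q^{(\alpha)}$ in (\ref{Q}) is exactly the combination of (i), while its denominator $M^{(2)}-2(M^{(1)})^{2}$ is the same combination at $\alpha=2$ (since $\Gamma(3)=\mu_{2}^{(1)}=2$). Forming the quotient and cancelling the common factor $\mathbf{A}_{0}(t)$ gives
\[
Q^{(\alpha)}(t;\mathbf{F})\longrightarrow\frac{\alpha\gamma_{1}^{\alpha-2}m_{\alpha}}{2m_{2}},
\]
with $m_{\alpha}=\mu_{\alpha}^{(2)}-\mu_{\alpha}^{(1)}\mu_{1}^{(2)}$; substituting the closed forms from (\ref{mu1-2}), in particular $\mu_{1}^{(2)}=1/(1-\rho_{1})$ and $m_{2}=\rho_{1}/(1-\rho_{1})^{2}$, and using $\alpha\Gamma(\alpha)=\Gamma(\alpha+1)$ reduces the right-hand side to $q_{\alpha}(\rho_{1})$ of (\ref{q}). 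Finally (iii) follows from (\ref{S}): inserting the limit of (ii) and simplifying $\delta(\alpha)\,q_{2\alpha}/q_{\alpha+1}^{2}$ by means of the definition of $\delta(\alpha)$ and the identity $\Gamma(2\alpha+1)/\Gamma(\alpha+2)^{2}$ collapses the expression to $s_{\alpha}(\rho_{1})$ of (\ref{s}).

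The only genuinely delicate step is the first one: establishing the expansion of $M^{(\alpha)}$ with a controlled $o(\mathbf{A}_{0}(t))$ remainder. This needs the inversion of the second-order condition from the quantile scale to the tail scale together with a uniform Potter bound valid on the whole half-line $u\ge1$, so that the remainder integral is dominated by a convergent integral independently of $t$; once that domination is in hand, everything else is routine Gamma-function bookkeeping.
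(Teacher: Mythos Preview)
Your proposal is correct and follows essentially the same route as the paper: rewrite $M^{(\alpha)}$ via integration by parts as $\int_{1}^{\infty}\overline{\mathbf{F}}(zu)/\overline{\mathbf{F}}(z)\,d\log^{\alpha}u$, apply the uniform second-order bound of \cite{Hua} (the paper's inequality (\ref{Potter1})) to obtain the expansion $M^{(\alpha)}=\gamma_{1}^{\alpha}\mu_{\alpha}^{(1)}+\alpha\gamma_{1}^{\alpha-1}\mu_{\alpha}^{(2)}(\rho_{1})\mathbf{A}_{0}(t)(1+o(1))$, then use a first-order expansion of $(M^{(1)})^{\alpha}$ (the paper phrases this as the mean value theorem) and read off (ii) and (iii) algebraically. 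The only cosmetic difference is that the paper organises (i) through the auxiliary quantity $U^{(\alpha)}=\gamma_{1}^{\alpha}\mu_{\alpha}^{(1)}$ before combining terms, whereas you cancel the leading constants directly.
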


\begin{proof}
Let us consider assertion $\left(  i\right)  .$ We begin by letting
\[
U^{\left(  \alpha\right)  }:=-\int_{1}^{\infty}\log^{\alpha}sds^{-1/\gamma
_{1}}\text{ and }\ell\left(  t\right)  :=\frac{M^{\left(  \alpha\right)
}\left(  t;\mathbf{F}\right)  -\mu_{\alpha}^{\left(  1\right)  }\left(
M^{\left(  1\right)  }\left(  t;\mathbf{F}\right)  \right)  ^{\alpha}%
}{\mathbf{A}_{0}\left(  t\right)  },
\]
to show that, for any $\alpha>0$
\begin{equation}
\lim_{t\rightarrow\infty}\ell\left(  t\right)  =\alpha\gamma_{1}^{\alpha
-1}\left(  \mu_{\alpha}^{\left(  2\right)  }\left(  \rho_{1}\right)
-\mu_{\alpha}^{\left(  1\right)  }\mu_{1}^{\left(  2\right)  }\left(  \rho
_{1}\right)  \right)  . \label{limit1}%
\end{equation}
Indeed, let us first decompose $\ell\left(  t\right)  $ into%
\[
\frac{M^{\left(  \alpha\right)  }\left(  t;\mathbf{F}\right)  -U^{\left(
\alpha\right)  }}{\mathbf{A}_{0}\left(  t\right)  }-\mu_{\alpha}^{\left(
1\right)  }\frac{\left(  M^{\left(  1\right)  }\left(  t;\mathbf{F}\right)
\right)  ^{\alpha}-\left(  U^{\left(  1\right)  }\right)  ^{\alpha}%
}{\mathbf{A}_{0}\left(  t\right)  }+\frac{U^{\left(  \alpha\right)  }\left(
t\right)  -\mu_{\alpha}^{\left(  1\right)  }\left(  U^{\left(  1\right)
}\right)  ^{\alpha}}{\mathbf{A}_{0}\left(  t\right)  },
\]
and note that $\mu_{\alpha}^{\left(  1\right)  }=\Gamma\left(  \alpha
+1\right)  $ where $\Gamma\left(  a\right)  =\int_{0}^{\infty}e^{-x}%
x^{a-1}dx=\int_{1}^{\infty}t^{-2}\log^{a-1}tdt,$ $a>0.$ It is easy to verify
that $U^{\left(  \alpha\right)  }-\mu_{\alpha}^{\left(  1\right)  }\left(
U^{\left(  1\right)  }\right)  ^{\alpha}=0,$ therefore%
\begin{equation}
\ell\left(  t\right)  =\frac{M^{\left(  \alpha\right)  }\left(  t;\mathbf{F}%
\right)  -U^{\left(  \alpha\right)  }}{\mathbf{A}_{0}\left(  t\right)  }%
-\mu_{\alpha}^{\left(  1\right)  }\frac{\left(  M^{\left(  1\right)  }\left(
t;\mathbf{F}\right)  \right)  ^{\alpha}-\left(  U^{\left(  1\right)  }\right)
^{\alpha}}{\mathbf{A}_{0}\left(  t\right)  }. \label{teta}%
\end{equation}
Recall that $M^{\left(  \alpha\right)  }\left(  t;\mathbf{F}\right)  =\int
_{u}^{\infty}\log^{\alpha}\left(  x/u\right)  d\mathbf{F}\left(  x\right)
/\overline{\mathbf{F}}\left(  u\right)  ,$ where $u:=\mathbb{U}_{F^{\ast}%
}\left(  t\right)  ,$ which by a change of variables and an integration by
parts, may be rewritten into
\[
\int_{1}^{\infty}\frac{\overline{\mathbf{F}}\left(  ux\right)  }%
{\overline{\mathbf{F}}\left(  u\right)  }d\log^{\alpha}x=:M_{u}^{\left(
\alpha\right)  }\left(  \mathbf{F}\right)  ,
\]
Making use, once again, of Proposition 4 of \cite{Hua}, we write: for possibly
different function $\widetilde{\mathbf{A}}_{\mathbf{F}},$ with $\widetilde
{\mathbf{A}}_{\mathbf{F}}\left(  y\right)  \sim\mathbf{A}_{\mathbf{F}}\left(
y\right)  ,$ as $y\rightarrow\infty,$ for any $0<\epsilon<1$ and $x\geq1,$ we
have%
\[
\left\vert \frac{\overline{\mathbf{F}}\left(  ux\right)  /\overline
{\mathbf{F}}\left(  u\right)  -x^{-1/\gamma_{1}}}{\gamma_{1}^{-2}%
\widetilde{\mathbf{A}}_{\mathbf{F}}\left(  1/\overline{\mathbf{F}}\left(
u\right)  \right)  }-x^{-1/\gamma_{1}}\dfrac{x^{\rho_{1}/\gamma_{1}}-1}%
{\gamma_{1}/\rho_{1}}\right\vert \leq\epsilon x^{-1/\gamma_{1}+\epsilon
},\text{ as }u\rightarrow\infty.
\]
By using elementary analysis, we easily show that this inequality implies that%
\[
\frac{M_{u}^{\left(  \alpha\right)  }\left(  \mathbf{F}\right)  -U^{\left(
\alpha\right)  }}{\widetilde{\mathbf{A}}_{\mathbf{F}}\left(  1/\overline
{\mathbf{F}}\left(  u\right)  \right)  }\rightarrow\alpha\gamma_{1}^{\alpha
-1}\mu_{\alpha}^{\left(  2\right)  }\left(  \rho_{1}\right)  ,\text{ as
}u\rightarrow\infty.
\]
Hence, since $1/\overline{\mathbf{F}}\left(  u\right)  \rightarrow\infty$ as
$u\rightarrow\infty,$ then $\widetilde{\mathbf{A}}_{\mathbf{F}}\left(
1/\overline{\mathbf{F}}\left(  u\right)  \right)  \sim\mathbf{A}_{\mathbf{F}%
}\left(  1/\overline{\mathbf{F}}\left(  u\right)  \right)  =\mathbf{A}%
_{0}\left(  t\right)  .$ This means that
\begin{equation}
\frac{M^{\left(  \alpha\right)  }\left(  t,\mathbf{F}\right)  -U^{\left(
\alpha\right)  }}{\mathbf{A}_{0}\left(  t\right)  }\rightarrow\alpha\gamma
_{1}^{\alpha-1}\mu_{\alpha}^{\left(  2\right)  }\left(  \rho_{1}\right)
,\text{ as }t\rightarrow\infty. \label{a}%
\end{equation}
Note that for $\alpha=1,$ we have $U^{\left(  1\right)  }=\gamma_{1}$ and
therefore $\left(  M^{\left(  1\right)  }\left(  t,\mathbf{F}\right)
-\gamma_{1}\right)  /\mathbf{A}_{0}\left(  t\right)  \rightarrow\mu
_{1}^{\left(  2\right)  }\left(  \rho_{1}\right)  ,$ which implies that
$M^{\left(  1\right)  }\left(  t,\mathbf{F}\right)  \rightarrow\gamma_{1}.$ By
using the mean value theorem and the previous two results we get%
\begin{equation}
\frac{\left(  M^{\left(  1\right)  }\left(  t,\mathbf{F}\right)  \right)
^{\alpha}-\gamma_{1}^{\alpha}}{\mathbf{A}_{0}\left(  t\right)  }%
\rightarrow\alpha\gamma_{1}^{\alpha-1}\mu_{1}^{\left(  2\right)  }\left(
\rho_{1}\right)  ,\text{ as }t\rightarrow\infty. \label{b}%
\end{equation}
Combining $\left(  \ref{teta}\right)  ,$ $\left(  \ref{a}\right)  $ and
$\left(  \ref{b}\right)  $ leads to $\left(  \ref{limit1}\right)  .$ Finally,
we use the fact that $M^{\left(  1\right)  }\left(  t,\mathbf{F}\right)
\rightarrow\gamma_{1}$ to achieve the proof of assertion $\left(  i\right)  .$
To show assertion $\left(  ii\right)  ,$ we apply assertion $\left(  i\right)
$ twice, for $\alpha>0$ and for $\alpha=2,$ then we divide the respective
results to get%
\begin{align*}
Q^{\left(  \alpha\right)  }\left(  t;\mathbf{F}\right)   &  =\frac{M^{\left(
\alpha\right)  }\left(  t;\mathbf{F}\right)  -\mu_{\alpha}^{\left(  1\right)
}\left[  M^{\left(  1\right)  }\left(  t;\mathbf{F}\right)  \right]  ^{\alpha
}}{M^{\left(  2\right)  }\left(  t;\mathbf{F}\right)  -2\left[  M^{\left(
1\right)  }\left(  t;\mathbf{F}\right)  \right]  ^{2}}\\
&  \sim\frac{\left(  M^{\left(  1\right)  }\left(  t;\mathbf{F}\right)
\right)  ^{\alpha-1}}{M^{\left(  1\right)  }\left(  t;\mathbf{F}\right)
}\frac{\alpha\left(  \mu_{\alpha}^{\left(  2\right)  }\left(  \rho_{1}\right)
-\mu_{\alpha}^{\left(  1\right)  }\mu_{1}^{\left(  2\right)  }\left(  \rho
_{1}\right)  \right)  }{2\left(  \mu_{2}^{\left(  2\right)  }\left(  \rho
_{1}\right)  -\mu_{2}^{\left(  1\right)  }\mu_{1}^{\left(  2\right)  }\left(
\rho_{1}\right)  \right)  }.
\end{align*}
By replacing $\mu_{\alpha}^{\left(  1\right)  }$ and $\mu_{\alpha}^{\left(
2\right)  }\left(  \rho_{1}\right)  $ by their expressions, given in
$(\ref{mu1-2}),$ we get%
\begin{align*}
&  \alpha\left(  \mu_{\alpha}^{\left(  2\right)  }\left(  \rho_{1}\right)
-\mu_{\alpha}^{\left(  1\right)  }\mu_{1}^{\left(  2\right)  }\left(  \rho
_{1}\right)  \right) \\
&  =\alpha\left\{  \frac{\Gamma\left(  \alpha\right)  \left(  1-\left(
1-\rho_{1}\right)  ^{\alpha}\right)  }{\rho_{1}\left(  1-\rho_{1}\right)
^{\alpha}}-\Gamma\left(  \alpha+1\right)  \frac{\Gamma\left(  1\right)
\left(  1-\left(  1-\rho_{1}\right)  \right)  }{\rho_{1}\left(  1-\rho
_{1}\right)  }\right\} \\
&  =\alpha\left\{  \frac{\Gamma\left(  \alpha\right)  \left(  1-\left(
1-\rho_{1}\right)  ^{\alpha}\right)  }{\rho_{1}\left(  1-\rho_{1}\right)
^{\alpha}}-\frac{\Gamma\left(  \alpha+1\right)  }{1-\rho_{1}}\right\}  .
\end{align*}
Since $M^{\left(  1\right)  }\left(  t,\mathbf{F}\right)  \rightarrow
\gamma_{1},$ then%
\[
Q^{\left(  \alpha\right)  }\left(  t;\mathbf{F}\right)  \rightarrow
\frac{\gamma_{1}^{\alpha-2}\Gamma\left(  \alpha+1\right)  \left(  1-\left(
1-\rho_{1}\right)  ^{\alpha}-\alpha\rho_{1}\left(  1-\rho_{1}\right)
^{\alpha-1}\right)  }{\rho_{1}^{2}\left(  1-\rho_{1}\right)  ^{\alpha-2}%
},\text{ as }t\rightarrow\infty,
\]
which is $q_{\alpha}\left(  \rho_{1}\right)  $ given in $\left(
\ref{q}\right)  .$ For assertion $\left(  iii\right)  ,$ it is clear that%
\[
\delta\left(  \alpha\right)  \frac{Q_{t}^{\left(  2\alpha\right)  }}{\left(
Q_{t}^{\left(  \alpha+1\right)  }\right)  ^{2}}\rightarrow\frac{\rho_{1}%
^{2}\left(  1-\left(  1-\rho_{1}\right)  ^{2\alpha}-2\alpha\rho_{1}\left(
1-\rho_{1}\right)  ^{2\alpha-1}\right)  }{\left(  1-\left(  1-\rho_{1}\right)
^{\alpha+1}-\left(  \alpha+1\right)  \rho_{1}\left(  1-\rho_{1}\right)
^{\alpha}\right)  ^{2}},
\]
which meets the expression of $s_{\alpha}\left(  \rho_{1}\right)  $ given in
$\left(  \ref{s}\right)  .$
\end{proof}

\begin{lemma}
\textbf{\label{lemma2} }For any\textbf{\ }$\alpha>0,$ we have $\int
_{1}^{\infty}\mathcal{L}\left(  x;\mathbf{W}\right)  d\log^{\alpha
}x=O_{\mathbf{P}}\left(  1\right)  .$
\end{lemma}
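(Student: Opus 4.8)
The plan is to show that the random variable $\int_{1}^{\infty}\mathcal{L}\left(  x;\mathbf{W}\right)  d\log^{\alpha}x$ is integrable in absolute value in the mean, which at once gives its almost-sure finiteness and hence the stated $O_{\mathbf{P}}\left(  1\right)  $ bound via Markov's inequality. Since after taking absolute values the integrand is nonnegative, I would first invoke Tonelli's theorem to write $\mathbf{E}\int_{1}^{\infty}\left\vert \mathcal{L}\left(  x;\mathbf{W}\right)  \right\vert d\log^{\alpha}x=\int_{1}^{\infty}\mathbf{E}\left\vert \mathcal{L}\left(  x;\mathbf{W}\right)  \right\vert d\log^{\alpha}x$, and then bound $\mathbf{E}\left\vert \mathcal{L}\left(  x;\mathbf{W}\right)  \right\vert \leq\left(  \mathbf{E}\,\mathcal{L}^{2}\left(  x;\mathbf{W}\right)  \right)  ^{1/2}$ by the Cauchy--Schwarz inequality. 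Everything thus reduces to controlling $\left\Vert \mathcal{L}\left(  x;\mathbf{W}\right)  \right\Vert _{2}$ as a function of $x$ and to verifying that $\int_{1}^{\infty}\left\Vert \mathcal{L}\left(  x;\mathbf{W}\right)  \right\Vert _{2}d\log^{\alpha}x<\infty$.

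The next step is a covariance computation. Writing $\mathcal{L}\left(  x;\mathbf{W}\right)  $ as the sum of $\frac{\gamma}{\gamma_{1}}x^{-1/\gamma_{1}}\{x^{1/\gamma}\mathbf{W}\left(  x^{-1/\gamma}\right)  -\mathbf{W}\left(  1\right)  \}$ and $\frac{\gamma}{\gamma_{1}+\gamma_{2}}x^{-1/\gamma_{1}}\int_{0}^{1}s^{-\gamma/\gamma_{2}-1}\{x^{1/\gamma}\mathbf{W}\left(  x^{-1/\gamma}s\right)  -\mathbf{W}\left(  s\right)  \}ds$, and using $\mathbf{E}[\mathbf{W}\left(  u\right)  \mathbf{W}\left(  v\right)  ]=\min\left(  u,v\right)  $ together with $x^{-1/\gamma}\leq1$ for $x\geq1$, one checks the key identity $\mathbf{E}[\,(x^{1/\gamma}\mathbf{W}\left(  x^{-1/\gamma}s\right)  -\mathbf{W}\left(  s\right)  )^{2}\,]=\left(  x^{1/\gamma}-1\right)  s$. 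This yields $\left\Vert x^{1/\gamma}\mathbf{W}\left(  x^{-1/\gamma}\right)  -\mathbf{W}\left(  1\right)  \right\Vert _{2}=\sqrt{x^{1/\gamma}-1}$ for the first term and, applying Minkowski's integral inequality to the second term, $\left\Vert \int_{0}^{1}s^{-\gamma/\gamma_{2}-1}\{\cdots\}ds\right\Vert _{2}\leq\sqrt{x^{1/\gamma}-1}\int_{0}^{1}s^{-\gamma/\gamma_{2}-1/2}ds$. The last integral is finite precisely because $\gamma/\gamma_{2}=\gamma_{1}/\left(  \gamma_{1}+\gamma_{2}\right)  <1/2$, which is the standing assumption $\gamma_{1}<\gamma_{2}$ under which the process $\mathcal{L}$ is defined. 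Combining the two pieces gives a bound of the form $\left\Vert \mathcal{L}\left(  x;\mathbf{W}\right)  \right\Vert _{2}\leq C\,x^{-1/\gamma_{1}}\sqrt{x^{1/\gamma}-1}$ for a constant $C=C\left(  \gamma_{1},\gamma_{2}\right)  $.

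It then remains to verify the convergence of $\int_{1}^{\infty}x^{-1/\gamma_{1}}\sqrt{x^{1/\gamma}-1}\,d\log^{\alpha}x$, where $d\log^{\alpha}x=\alpha\left(  \log x\right)  ^{\alpha-1}x^{-1}dx$. Near infinity the integrand is of order $x^{\left(  \gamma_{1}-\gamma_{2}\right)  /\left(  2\gamma_{1}\gamma_{2}\right)  -1}\left(  \log x\right)  ^{\alpha-1}$, and since $\gamma_{1}<\gamma_{2}$ the power of $x$ is strictly below $-1$, so the tail converges irrespective of the logarithmic factor. Near $x=1$, writing $x=1+u$ gives $\sqrt{x^{1/\gamma}-1}=O\left(  u^{1/2}\right)  $ and $\left(  \log x\right)  ^{\alpha-1}=O\left(  u^{\alpha-1}\right)  $, so the integrand is $O\left(  u^{\alpha-1/2}\right)  $, which is integrable at $0$ for every $\alpha>0$. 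Hence the integral is finite, $\mathbf{E}\int_{1}^{\infty}\left\vert \mathcal{L}\left(  x;\mathbf{W}\right)  \right\vert d\log^{\alpha}x<\infty$, and the conclusion $\int_{1}^{\infty}\mathcal{L}\left(  x;\mathbf{W}\right)  d\log^{\alpha}x=O_{\mathbf{P}}\left(  1\right)  $ follows.

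The main obstacle is the second (integral) term of $\mathcal{L}$, whose kernel $s^{-\gamma/\gamma_{2}-1}$ is by itself non-integrable at $s=0$; the argument succeeds only because the Brownian increment $x^{1/\gamma}\mathbf{W}\left(  x^{-1/\gamma}s\right)  -\mathbf{W}\left(  s\right)  $ has $L^{2}$-norm of exact order $s^{1/2}$, and this extra half-power of $s$ tames the singularity exactly when $\gamma_{1}<\gamma_{2}$. Retaining this cancellation — rather than bounding the two Brownian terms separately, which would diverge — is the delicate point; the remaining estimates are routine integration.
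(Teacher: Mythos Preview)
Your proof is correct and follows the same overall strategy as the paper's: show that $\mathbf{E}\int_{1}^{\infty}\left\vert \mathcal{L}\left(  x;\mathbf{W}\right)  \right\vert d\log^{\alpha}x<\infty$, whence $O_{\mathbf{P}}\left(1\right)$. The execution differs slightly. The paper expands $\mathcal{L}$ into its four additive pieces, integrates each against $d\log^{\alpha}x$ to obtain four random variables $I_{1},\ldots,I_{4}$, and bounds each $\mathbf{E}\left\vert I_{i}\right\vert$ separately using only $\mathbf{E}\left\vert \mathbf{W}\left(y\right)\right\vert\leq\sqrt{y}$. You instead keep the Brownian increments paired, compute the exact second moment $\mathbf{E}\bigl[\bigl(x^{1/\gamma}\mathbf{W}\left(x^{-1/\gamma}s\right)-\mathbf{W}\left(s\right)\bigr)^{2}\bigr]=\left(x^{1/\gamma}-1\right)s$, and apply Cauchy--Schwarz plus Minkowski. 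Both routes hinge on the same condition $\gamma/\gamma_{2}<1/2$ (i.e.\ $\gamma_{1}<\gamma_{2}$) to make $\int_{0}^{1}s^{-\gamma/\gamma_{2}-1/2}ds$ finite, and both produce the same power $x^{(\gamma_{1}-\gamma_{2})/(2\gamma_{1}\gamma_{2})}$ at infinity.

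One correction to your closing commentary: your claim that ``bounding the two Brownian terms separately\ldots would diverge'' is not accurate. Splitting $x^{1/\gamma}\mathbf{W}\left(x^{-1/\gamma}s\right)$ and $\mathbf{W}\left(s\right)$ and bounding each individually gives $L^{2}$-norms $x^{1/(2\gamma)}\sqrt{s}$ and $\sqrt{s}$ respectively, and the resulting $s$-integrals converge under exactly the same hypothesis $\gamma_{1}<\gamma_{2}$; this is precisely what the paper does for its terms $I_{3}$ and $I_{4}$. Your pairing buys a factor $\sqrt{x^{1/\gamma}-1}$ in place of $x^{1/(2\gamma)}$, which vanishes at $x=1$, but that refinement is not needed for integrability since $d\log^{\alpha}x$ already vanishes there. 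So the ``cancellation'' you highlight is elegant but not essential.
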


\begin{proof}
Observe that $\int_{1}^{\infty}\mathcal{L}\left(  x;\mathbf{W}\right)
d\log^{\alpha}x$ may be decomposed into the sum of%
\[
I_{1}:=\frac{\gamma}{\gamma_{1}}\int_{1}^{\infty}x^{1/\gamma_{2}}%
\mathbf{W}\left(  x^{-1/\gamma}\right)  d\log^{\alpha}x,\text{ }I_{2}%
:=-\frac{\gamma}{\gamma_{1}}\mathbf{W}\left(  1\right)  \int_{1}^{\infty
}x^{-1/\gamma_{1}}d\log^{\alpha}x,
\]%
\[
I_{3}:=\frac{\gamma}{\gamma_{1}+\gamma_{2}}\int_{1}^{\infty}x^{1/\gamma_{2}%
}\left\{  \int_{0}^{1}s^{-\gamma/\gamma_{2}-1}\mathbf{W}\left(  x^{-1/\gamma
}s\right)  ds\right\}  d\log^{\alpha}x,
\]
and%
\[
I_{4}:=-\frac{\gamma}{\gamma_{1}+\gamma_{2}}\left\{  \int_{0}^{1}%
s^{-\gamma/\gamma_{2}-1}\mathbf{W}\left(  s\right)  ds\right\}  \int
_{1}^{\infty}x^{-1/\gamma_{1}}d\log^{\alpha}x.
\]
Next we show that $I_{i}=O_{\mathbf{P}}\left(  1\right)  ,$ $i=1,...,4.$ To
this end, we will show that $\mathbf{E}\left\vert I_{i}\right\vert $ is finite
for $i=1,...,4.$ Indeed, we have%
\[
\mathbf{E}\left\vert I_{1}\right\vert \leq\left(  \gamma/\gamma_{1}\right)
\int_{1}^{\infty}x^{-1/\gamma_{1}}x^{1/\gamma}\mathbf{E}\left\vert
\mathbf{W}\left(  x^{-1/\gamma}\right)  \right\vert d\log^{\alpha}x.
\]
Since $\mathbf{E}\left\vert \mathbf{W}\left(  y\right)  \right\vert \leq
\sqrt{y},$ for any $0\leq y\leq1,$ then $\mathbf{E}\left\vert I_{1}\right\vert
\leq\left(  \alpha\gamma/\gamma_{1}\right)  \int_{1}^{\infty}x^{1/\gamma
_{2}-1/\left(  2\gamma\right)  -1}\log^{\alpha-1}xdx.$ By successively making
two changes of variables $\log x=t,$ then $\left(  -1/\gamma_{2}+1/\left(
2\gamma\right)  +1\right)  t=s,$ we end up with $\mathbf{E}\left\vert
I_{1}\right\vert \leq\gamma\gamma_{1}^{\alpha-1}\left(  2\gamma/\left(
2\gamma-\gamma_{1}\right)  \right)  ^{\alpha}\Gamma\left(  \alpha+1\right)  $
which is finite for any $\alpha>0.$ By similar arguments we also show that
$\mathbf{E}\left\vert I_{2}\right\vert \leq\gamma\gamma_{1}^{\alpha-1}%
\Gamma\left(  \alpha+1\right)  $ which is finite as well. For the third term
$I_{3},$ we have%
\[
\mathbf{E}\left\vert I_{3}\right\vert \leq\frac{\gamma}{\gamma_{1}+\gamma_{2}%
}\int_{1}^{\infty}x^{1/\gamma_{2}}\left\{  \int_{0}^{1}s^{-\gamma/\gamma
_{2}-1}\mathbf{E}\left\vert \mathbf{W}\left(  x^{-1/\gamma}s\right)
\right\vert ds\right\}  d\log^{\alpha}x.
\]
By elementary calculations, we get%
\[
\mathbf{E}\left\vert I_{3}\right\vert \leq\frac{\gamma_{2}\left(
2\gamma\right)  ^{\alpha+1}}{\left(  \gamma_{2}-2\gamma\right)  \left(
\gamma_{1}+\gamma_{2}\right)  }\left(  \frac{\gamma_{1}}{2\gamma-\gamma_{1}%
}\right)  ^{\alpha}\Gamma\left(  \alpha+1\right)  ,
\]
which is also finite. By using similar arguments, we get
\[
\mathbf{E}\left\vert I_{4}\right\vert \leq\frac{2\gamma_{2}\gamma\gamma
_{1}^{\alpha}\Gamma\left(  \alpha+1\right)  }{\left(  \gamma_{1}+\gamma
_{2}\right)  \left(  \gamma_{2}-2\gamma\right)  }<\infty,
\]
as sought.
\end{proof}

\end{document}